\numberwithin{equation}{section}
\newtheorem{thm}{Theorem}[section]
\newtheorem{lem}[thm]{Lemma}
\newtheorem{prop}[thm]{Proposition}
\newtheorem{cor}[thm]{Corollary}
\newtheorem{rem}[thm]{Remark}
\newtheorem{exam}[thm]{Example}
\newtheorem{exam-nota}[thm]{Example-Notation}
\newtheorem{nota}[thm]{Notation}
\newtheorem{dfn-nota}[thm]{Definition-Notation}
\newtheorem{dfn-lem}[thm]{Lemma-Definition}
\newcommand{\beqa}{\begin{eqnarray*}}
\newcommand{\eeqa}{\end{eqnarray*}}
\newcommand{\fa}{\mbox{${\mathfrak a}$}}
\newcommand{\fk}{\mbox{${\mathfrak k}$}}
\newcommand{\fg}{\mbox{${\mathfrak g}$}}
\newcommand{\fq}{\mbox{${\mathfrak q}$}}
\newcommand{\fl}{\mbox{${\mathfrak l}$}}
\newcommand{\fh}{\mbox{${\mathfrak h}$}}
\newcommand{\fn}{\mbox{${\mathfrak n}$}}
\newcommand{\fp}{\mbox{${\mathfrak p}$}}
\newcommand{\fb}{\mbox{${\mathfrak b}$}}
\newcommand{\fz}{\mbox{${\mathfrak z}$}}
\newcommand{\fm}{\mbox{${\mathfrak m}$}}
\newcommand{\fu}{\mbox{${\mathfrak u}$}}
\newcommand{\C}{\mbox{${\mathbb C}$}}
\newcommand{\Ad}{{\rm Ad}}
\newcommand{\fgl}{\mathfrak{gl}}
\newcommand{\dnone}{{n+1\choose 2}}
\newcommand{\fkn}{\fk_{n+1}}
\newcommand{\fpn}{\fp_{n+1}}
\newcommand{\pin}{\pi_{n}}
\newcommand{\calQ}{\mathcal{Q}}
\newcommand{\nilrad}{\fn_{\calQ}}
\newcommand{\borel}{\fb_{\calQ}}
\newcommand{\orbittower}{X_{\calQ}}
\newcommand{\B}{\mathcal{B}}
\title{K-orbits on the flag variety and strongly regular nilpotent matrices}
\author[M. Colarusso]{Mark Colarusso}
\address{D\'{e}partement de math\'{e}matiques et statistique, Universit\'{e} Laval, Qu\'{e}bec, Canada, G1V 0A6}
\email{mark.colarusso.1@ulaval.ca}
\author[S. Evens]{Sam Evens}
\address{Department of Mathematics, University of Notre Dame, Notre Dame, 46556}
\email{sevens@nd.edu}
\begin{document}
\maketitle

\begin{abstract}
In two 2006 papers, Kostant and Wallach constructed a complexified 
Gelfand-Zeitlin integrable system for the Lie algebra $\fgl(n+1,\C)$
and introduced the strongly regular elements, which are the points
where the Gelfand-Zeitlin flow is Lagrangian.  Later Colarusso studied
the nilfibre, which consists of strongly regular elements such that
each $i\times i$ submatrix in the upper left corner is nilpotent.  In this paper, we prove that every Borel
subalgebra contains strongly regular elements and determine the
Borel subalgebras containing elements of the nilfibre
by using the theory of $K_{i}=GL(i-1,\C) \times GL(1,\C)$-orbits
on the flag variety for $\fgl(i,\C)$ for $2\leq i\leq n+1$. As a consequence, we obtain a more precise
description of the nilfibre. The $K_{i}$-orbits
contributing to the nilfibre are closely related to holomorphic
and anti-holomorphic discrete series for the real Lie groups $U(i,1)$,
with $i \le n$.
\end{abstract}

\section{Introduction}

In a series of papers \cite{KW1, KW2}, Kostant and Wallach study the action of a complex Lie group $A$ on $\fg=\fgl(n+1,\C)$.  The group $A$ is the simply connected, complex Lie group corresponding to the abelian Lie algebra $\fa$ generated by the Hamiltonian vector fields of the Gelfand-Zeitlin collection of functions.  The Gelfand-Zeitlin collection of functions contains $\frac{(n+2)(n+1)}{2}$ Poisson commuting functions and its restriction to each regular adjoint orbit forms an integrable system.  For each function in the collection, the corresponding Hamiltonian vector field is complete.  The action of $A$ on $\fg$ is then defined by integrating the Lie algebra $\fa$. 

Kostant and Wallach consider a Zariski open subset of $\fg$, called
the set of strongly regular elements, which consists of all elements where the
differentials of the Gelfand-Zeitlin functions
 are linearly independent.  The $A$-orbits of strongly regular elements are of dimension $\dnone$ and form Lagrangian submanifolds of regular adjoint orbits.  They coincide with the irreducible components of regular level sets of the moment map for the Gelfand-Zeitlin integrable system.    In \cite{Col1}, the 
first author determined the $A$-orbits of the strongly regular set explicitly.
 
 In this paper, we use the geometry of orbits of a symmetric subgroup
on the flag variety to study the Borel subalgebras containing strongly
regular elements. Let $K_{n+1}:=GL(n,\C)\times GL(1,\C)\subset GL(n+1,\C)$
 be block diagonal matrices and let $\B_{n+1}$ be the flag variety of $\fg$.
Then $K_{n+1}$ acts on $\B_{n+1}$ by conjugation and has finitely many
orbits.  We find a new connection between the theory of $K_{n+1}$-orbits 
on $\B_{n+1}$ and the Gelfand-Zeitlin
integrable system.  In particular, we use the geometry of $K_{n+1}$-orbits on $\B_{n+1}$ to show that 
every Borel subalgebra of $\fg$ contains strongly regular elements.  
We also determine explicitly the Borel subalgebras which contain strongly regular elements in the nilfibre 
of the moment map for the Gelfand-Zeitlin system.  We show that there
are exactly $2^n$ Borel subalgebras containing strongly regular
elements of the nilfibre.  Further, for each of these $2^n$ Borel
subalgebras, the regular nilpotent elements of the Borel is an irreducible
component of the variety of strongly regular elements of the nilfibre, and every 
irreducible component of the variety of strongly regular elements of the nilfibre
arises from a Borel subalgebra in this way.  These $2^n$ Borel subalgebras 
are exactly the Borel subalgebras $\fb$ with the property that 
each projection of $\fb$ to
$\fgl(i,\C)$ for $i=1, \dots, n+1$ is a Borel subalgebra of $\B_i$
whose $K_{i}$-orbit is related via the Beilinson-Bernstein
correspondence to Harish-Chandra modules for the pair $(\fgl(i,\C),
K_{i})$ coming from holomorphic and anti-holomorphic discrete series.
It would be interesting to relate our results to representation
theory, especially to work of Kobayashi \cite{Kobsur}.
For more on the relation between geometry of orbits of a symmetric subgroup
and Harish-Chandra modules, see
\cite{Vg}, \cite{HMSW}, \cite{Collingwood}.

 In more detail, we denote by $x_i$ the upper left $i\times i$ corner of the matrix $x \in \fg$.  The Gelfand-Zeitlin collection of functions is $J_{GZ}=\{f_{i,j}(x), i=1,\dots, n+1,\;j=1,\dots, i\}$, where $f_{i,j}(x)=Tr(x_{i}^{j})$, with
  $Tr(y)$ denoting the trace of $y\in\fgl(i,\C)$.  We denote
by $\fg_i = \{ x_i : x\in \fg \} \cong \fgl(i,\C)$ embedded in $\fg$ as the upper left corner, 
and denote by $G_i \cong GL(i,\C)$
the corresponding algebraic group. The
space $\fa$ spanned by $\{ \xi_f : f\in J_{GZ} \}$ is an abelian Lie algebra.   An
element $x\in \fg$ is called {\it strongly regular} if the set
 $\{ df(x) : f\in J_{GZ} \}$ is
linearly independent in $T_x^*(\fg)$.  Kostant and Wallach showed that the set $\fg_{sreg}$ of
$\fg$ consisting of strongly regular elements is open and Zariski dense.  


The geometry of $\fg_{sreg}$ and its $A$-orbit structure have been studied by the first author in \cite{Col1} and by both authors in \cite{CE}. 
We consider $\Phi:\fg \to \C^{{n+2\choose 2}}$, the moment map for
the Gelfand-Zeitlin integrable system, and for $c\in \C^{{n+2\choose 2}}$,
we let $\Phi^{-1}(c)_{sreg} = \Phi^{-1}(c) \cap \fg_{sreg}$.
 In \cite{Col1}, the first author describes strongly regular $A$-orbits by studying the moment map $\Phi$, and shows that the $A$-orbits in $\Phi^{-1}(c)_{sreg}$ coincide with orbits of an algebraic group action defined on the fibre $\Phi^{-1}(c)_{sreg}$.  In this paper, we develop a completely different approach to studying the $A$-orbits in the nilfibre $\Phi^{-1}(0)_{sreg}:=\Phi^{-1}((0,\dots, 0))_{sreg}$ by finding the Borel subalgebras of $\fg$ which contain elements of $\Phi^{-1}(0)_{sreg}$.  This approach does not require the use of the complicated algebraic group action in \cite{Col1}.    

It is easy to see that an element $x\in\Phi^{-1}(0)_{sreg}$ if and only if $x\in\fg_{sreg}$ and $x_{i}\in\fg_{i}$ is nilpotent for all $i$ (see Remark \ref{rem_git}).  Elements of $\fgl(n+1,\C)$ satisfying the second condition have been studied extensively by Lie theorists and numerical linear algebraists \cite{PS}, \cite{Ov}.  To describe the irreducible components of $\Phi^{-1}(0)_{sreg}$, we study the action of $K_{i}=GL(i-1,\C)\times GL(1,\C)$ on the flag variety $\B_{i}$ of $\fgl(i,\C)$ for $i=1,\dots, n+1$.  If $\fb\subset \fg_{i}$ is a Borel subalgebra, let $\fb_{i-1}=\{x_{i-1}:\, x\in\fb\}$.  Let $Q_{+,i}$ be the $K_{i}$-orbit of the Borel subalgebra of $i\times i$ upper triangular matrices in $\B_{i}$ and let $Q_{-,i}$ be the $K_{i}$-orbit of the Borel subalgebra of $i\times i$ lower triangular matrices in $\B_{i}$.  We show that if $\fb\in Q_{+,i}$ or $\fb\in Q_{-,i}$, then $\fb_{i-1}\subset\fg_{i-1}$ is a Borel subalgebra (see Proposition \ref{prop:borels}).  Let $\calQ=(Q_{1},\dots, Q_{n+1})$ denote a sequence of $K_{i}$-orbits $Q_{i}$ in $\B_{i}$ with $Q_{i}=Q_{+,i}$ or $Q_{-,i}$.  We can then define the closed subvariety of $\B_{n+1}$
$$
\borel=\{\fb\in\B_{n+1}:\, \fb_{i}\in Q_{i},\, i=1,\dots, n+1\}.
$$

\begin{thm}\label{thm:towerintro}(Theorem \ref{thm:areborels})
The subvariety $\borel\subset\B_{n+1}$ is a single Borel subalgebra in $\fg$ that contains the standard Cartan subalgebra of diagonal matrices.  
\end{thm}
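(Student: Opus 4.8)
The plan is an induction on $n$, resting on an explicit flag-theoretic description of the two closed $K_i$-orbits and of the corner map on them. Throughout I would identify $\B_i$ with the variety of complete flags $F_\bullet=(0=F_0\subset F_1\subset\cdots\subset F_i=\C^i)$ of $\C^i=\lspan(e_1,\dots,e_i)$, writing $F_\bullet(\fb)$ for the flag stabilized by $\fb$; then $K_i=GL(i-1,\C)\times GL(1,\C)$ respects the splitting $\C^i=\C^{i-1}\oplus\C e_i$ with $\C^{i-1}=\lspan(e_1,\dots,e_{i-1})$, and a Borel subalgebra of $\fg_i$ contains the diagonal Cartan exactly when its flag consists of coordinate subspaces (spans of subsets of $\{e_1,\dots,e_i\}$). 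The first step is to check the identifications
\[
Q_{+,i}=\{\fb\in\B_i:\ F_{i-1}(\fb)=\C^{i-1}\},\qquad Q_{-,i}=\{\fb\in\B_i:\ F_1(\fb)=\C e_i\}.
\]
Both sets are $K_i$-stable, they contain respectively the upper- and lower-triangular Borels, and each is a single $K_i$-orbit because $GL(i-1,\C)$ acts transitively on complete flags of $\C^{i-1}$ (with $F_i=\C^i$ then automatic).

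Next I would establish the key lemma: for $2\le i\le n+1$ the corner map $\fb\mapsto\fb_{i-1}$ restricts to a bijection of $Q_{+,i}$ onto $\B_{i-1}$, and likewise of $Q_{-,i}$ onto $\B_{i-1}$, carrying Borels with coordinate flag to Borels with coordinate flag. For $\fb\in Q_{+,i}$ we have $F_j(\fb)\subseteq\C^{i-1}$ for $j\le i-1$, so the projection $\pr_{i-1}\colon\C^i\to\C^{i-1}$ with kernel $\C e_i$ is the identity on each such $F_j$; hence each $x_{i-1}$ ($x\in\fb$) preserves $F_j$, and $\fb_{i-1}$ equals the Borel of $\fg_{i-1}$ stabilizing $(F_0\subset\cdots\subset F_{i-1}=\C^{i-1})$ --- equality, not merely containment, because one lifts any such $y$ to an $x\in\fb$ block-diagonally (or invokes Proposition \ref{prop:borels}). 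This flag of $\C^{i-1}$ determines and is determined by $\fb$, which yields the bijection, and it is a coordinate flag iff $F_\bullet(\fb)$ is. For $\fb\in Q_{-,i}$ the kernel $\C e_i$ of $\pr_{i-1}$ lies in every $F_j(\fb)$, $j\ge 1$; passing to $\C^i/\C e_i\cong\C^{i-1}$ identifies $\fb_{i-1}$ with the Borel stabilizing the image flag $(\pr_{i-1}(F_1)\subset\cdots\subset\pr_{i-1}(F_i)=\C^{i-1})$, which reconstructs $F_\bullet(\fb)$ by re-adjoining $\C e_i$, again giving a bijection preserving the coordinate-flag property.

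With the lemma available, the induction proceeds as follows. Fix $\calQ=(Q_1,\dots,Q_{n+1})$ and let $Y\subseteq\B_n$ be the analogue of $\borel$ for the truncated sequence $(Q_1,\dots,Q_n)$. Using the associativity $(\fb_n)_j=\fb_j$ for $j\le n$, one sees that $\fb\in\borel$ if and only if $\fb_n\in Y$ and $\fb\in Q_{n+1}$. By the inductive hypothesis $Y=\{\fb'\}$ for a single Borel $\fb'\subset\fg_n$ with coordinate flag (the base case $n+1=1$ is trivial, $\B_1$ being a point). By the key lemma the corner map restricts to a bijection $Q_{n+1}\to\B_n$, so there is a unique $\fb\in Q_{n+1}$ with $\fb_n=\fb'$; thus $\borel=\{\fb\}$, and since $\fb'$ has a coordinate flag so does $\fb$, i.e. $\fb$ contains the diagonal Cartan of $\fg$.

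I expect the crux to be the key lemma --- pinning down $Q_{+,i}$ and $Q_{-,i}$ as exactly the two displayed sets, and verifying that $\fb_{i-1}$ equals (rather than merely sits inside) the Borel attached to the induced flag. Once that is in place, everything else is bookkeeping with flags and the compatibility of the corner maps.
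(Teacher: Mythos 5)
Your argument is correct, but it proves the theorem by a genuinely different mechanism than the paper does. The paper's proof of Theorem \ref{thm:areborels} also inducts on $n$, but its uniqueness step is group-theoretic: given $\fb,\fb'\in\borel$, induction gives $\fb_n=\fb'_n=\fm$, one writes $\fb'=\Ad(k)\fb$ with $k\in K_{n+1}$, deduces $\Ad(k)\fm=\fm$ so that the $GL(n,\C)$-block of $k$ lies in the Borel subgroup $M$ of $\fm$, and then uses the second assertion of Proposition \ref{prop:borels} (that $\pi_n(\fb)\subset\fb$) to conclude $k\in B$ and hence $\fb'=\fb$; nonemptiness is handled separately, in the construction of $\orbittower$ preceding the theorem. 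Your key lemma --- that $\fb\mapsto\fb_{i-1}$ is a bijection of each of $Q_{+,i}$, $Q_{-,i}$ onto $\B_{i-1}$ preserving the coordinate-flag property --- replaces both steps at once: existence and uniqueness of the element of $Q_{n+1}$ lying over $\fb'$ come for free, and the whole argument is elementary linear algebra with flags. The trade-offs: the paper's version is stated and proved for \emph{arbitrary} sequences of closed $K_i$-orbits and also establishes the converse (every standard Borel is some $\borel$), neither of which your write-up covers --- though for the statement as quoted, which fixes $Q_i\in\{Q_{+,i},Q_{-,i}\}$, this is not a gap, and your bijection does extend to the other closed orbits (a flag in the $j$-th closed orbit is split by $\C^n\oplus\C e_{n+1}$ with $e_{n+1}$ entering at step $j$, and the corner map is again a bijection onto $\B_n$). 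Your identifications $Q_{+,i}=\{F_{i-1}(\fb)=\C^{i-1}\}$ and $Q_{-,i}=\{F_1(\fb)=\C e_i\}$, the equality $\fb_{i-1}=$ (stabilizer of the induced flag) via the block-diagonal lift, and the reduction $(\fb_n)_j=\fb_j$ all check out.
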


Let $\nilrad^{reg}$ denote the regular nilpotent elements of the Borel $\borel$.  

\begin{thm}\label{thm:bigintro}(Theorem \ref{thm:mainthm}). 

 The irreducible component decomposition of the variety $\Phi^{-1}(0)_{sreg}$ is
$$
\Phi^{-1}(0)_{sreg}=\coprod_{\calQ} \nilrad^{reg},
$$
where $\calQ=(Q_{1},\dots, Q_{n+1})$ ranges over all $2^{n}$ sequences
 where $Q_{i}=Q_{+,i}$ or $Q_{-,i}$ for $i>1$, and $Q_{1}=Q_{+,1}=Q_{-,1}$.
The $A$-orbits in $\Phi^{-1}(0)_{sreg}$ are exactly the irreducible
components
$\nilrad^{reg}$, for $\calQ$ as above.
\end{thm}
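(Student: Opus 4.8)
The plan is to establish Theorem~\ref{thm:bigintro} in three stages: first show that each $\nilrad^{reg}$ is contained in $\Phi^{-1}(0)_{sreg}$, then show these exhaust the fibre, and finally identify the $A$-orbit structure. For the first inclusion, fix a sequence $\calQ=(Q_1,\dots,Q_{n+1})$ with $Q_i\in\{Q_{+,i},Q_{-,i}\}$, and let $\fb=\borel$ be the associated Borel subalgebra from Theorem~\ref{thm:towerintro}. If $x\in\nilrad^{reg}$ is a regular nilpotent element of $\fb$, then $x$ is nilpotent, and because $\fb_i\in Q_i$ and $Q_i$ is either the $K_i$-orbit of upper- or lower-triangular matrices, the projection $x_i$ lies in a Borel subalgebra of $\fg_i$ whose $K_i$-orbit is $Q_{\pm,i}$; I would need the fact that such $x_i$ is regular nilpotent in $\fg_i$ (or at least nilpotent, giving $f_{i,j}(x)=\Tr(x_i^j)=0$ for all $i,j$, hence $\Phi(x)=0$). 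The more delicate point is strong regularity: I would use the characterization $x\in\Phi^{-1}(0)_{sreg}$ iff $x\in\fg_{sreg}$ and each $x_i$ is nilpotent (cited as Remark~\ref{rem_git}), together with a criterion — presumably established earlier in the paper via the $K_i$-orbit geometry — that a regular nilpotent element of such a ``tower-compatible'' Borel is automatically strongly regular. This is where the hypothesis that the $K_i$-orbits are precisely $Q_{\pm,i}$ (the closed orbits, related to discrete series) is essential: for other orbits the projection of a Borel need not be a Borel, and the regularity of the successive projections $x_i$ fails.

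For the reverse inclusion, I would take an arbitrary $x\in\Phi^{-1}(0)_{sreg}$ and produce a sequence $\calQ$ with $x\in\nilrad^{reg}$. The key is Proposition~\ref{prop:borels}: since $x$ is strongly regular, it lies in a \emph{unique} Borel subalgebra $\fb$ of $\fg$ (strong regularity forces the centralizer to be as small as possible, so $x$ is regular, and a regular element lies in a unique Borel), and similarly each $x_i$ is regular nilpotent in $\fg_i$ hence lies in a unique Borel $\fb^{(i)}$ of $\fg_i$. One then checks $\fb^{(i)}=(\fb^{(i+1)})_i=\{y_i : y\in\fb^{(i+1)}\}$ by a compatibility argument, so $\fb=\borel$ once we know each $K_i$-orbit of $\fb^{(i)}$ is $Q_{+,i}$ or $Q_{-,i}$. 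That last claim is the heart of the matter: I expect it follows from the analysis — again done earlier via $K_i$-orbits on $\B_i$ — showing that if $y_i$ is regular nilpotent and $(y_{i})_{i-1}=y_{i-1}$ is also nilpotent, then the Borel containing $y_i$ must be in the closed $K_i$-orbit. The combinatorial count of $2^n$ then drops out: there are two choices at each level $i=2,\dots,n+1$ and no choice at level $1$.

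For the $A$-orbit statement, I would argue that each $\nilrad^{reg}$ is a single $A$-orbit. Since $\nilrad^{reg}$ is the set of regular nilpotents in a fixed Borel $\fb$, it is irreducible of dimension equal to $\dim\fn_\calQ = \binom{n+1}{2}$, which matches the known dimension $\binom{n+2}{2}$ minus the $n+1$ functions, i.e. $\binom{n+1}{2}$, of a strongly regular $A$-orbit. Because $A$ preserves $\Phi^{-1}(0)_{sreg}$ and acts with orbits of exactly this dimension (Kostant--Wallach), each $A$-orbit is open in the irreducible variety $\nilrad^{reg}$; but the $A$-orbits also being closed in $\Phi^{-1}(0)_{sreg}$ (or at least, there being finitely many irreducible components each a single orbit, by the results of \cite{Col1} recalled in the introduction) forces $\nilrad^{reg}$ to be one orbit. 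Alternatively, and more self-containedly, I would exhibit explicitly that the adjoint action of the unipotent radical of $\fb$ together with a torus direction sweeps out all of $\nilrad^{reg}$ and matches the $\fa$-action; but the cleanest route is dimension plus irreducibility plus the general fact that strongly regular $A$-orbits are the irreducible components of $\Phi^{-1}(0)_{sreg}$.

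The main obstacle I anticipate is the strong-regularity claim in the first stage: verifying that a regular nilpotent element of a tower-compatible Borel $\borel$ actually has linearly independent Gelfand--Zeitlin differentials. This cannot be checked by a naive dimension count alone, since one must rule out degeneracies in the differentials $df_{i,j}$; I would handle it by reducing inductively to the $\fgl(i,\C)$ case and invoking the $K_i$-orbit description — specifically, that membership of $\fb_i$ in the closed orbit $Q_{\pm,i}$ guarantees that $x_i$ and $x_{i-1}$ are ``in relative general position'' in the precise sense needed for the Kostant--Wallach strong-regularity criterion to apply. Everything else (the uniqueness of Borels containing regular elements, the projection compatibility, the $2^n$ count, and the dimension bookkeeping for $A$-orbits) is comparatively routine given Theorem~\ref{thm:towerintro} and Proposition~\ref{prop:borels}.
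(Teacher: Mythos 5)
Your outline follows essentially the same route as the paper: induction on $n$, the tower construction of Theorem \ref{thm:towerintro} together with Proposition \ref{prop:borels} for the compatibility of projections, a ``converse'' statement (the paper's Proposition \ref{prop:converse}) showing that a regular nilpotent element of $\borel$ has regular nilpotent projections with disjoint centralizers --- which via Theorem \ref{d:sreg}(2) gives strong regularity and hence $\nilrad^{reg}\subset\Phi^{-1}(0)_{sreg}$ --- and finally the Kostant--Wallach result that $\Phi^{-1}(0)_{sreg}$ has pure dimension ${n+1\choose 2}$ with irreducible components equal to connected components equal to $A$-orbits, which upgrades the open dense inclusion of each $\nilrad^{reg}$ in a component to equality. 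The ``main obstacle'' you flag (strong regularity of regular nilpotents in a tower-compatible Borel) is exactly what Proposition \ref{prop:converse} resolves, by the explicit basis $\{I,\Ad(b)e,\dots,\Ad(b)e^{n}\}$ of $\fz_{\fg}(x)$; your instinct to reduce it to a ``relative general position'' statement feeding into Theorem \ref{d:sreg}(2) is the right one.

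There is, however, one genuine gap in your exhaustion step. To show that the unique Borel $\fb^{(i)}$ containing $x_{i}$ lies in $Q_{+,i}$ or $Q_{-,i}$, you invoke only the consequence of nilpotency: ``if $y_{i}$ is regular nilpotent and $y_{i-1}$ is also nilpotent, then the Borel containing $y_{i}$ must be in the closed $K_{i}$-orbit.'' But there are $i$ closed $K_{i}$-orbits on $\B_{i}$, not two, and closedness alone does not single out the orbits of the upper and lower triangular Borels; your parenthetical identifying $Q_{\pm,i}$ with ``the closed orbits'' conflates these. The nilpotency condition (Proposition \ref{prop:closed}) only places $\fb^{(i)}$ in one of the $i$ closed orbits; to eliminate the remaining $i-2$ of them one must use the second condition in (\ref{con:theconditions}), namely $\fz_{\fg_{i-1}}(x_{i-1})\cap\fz_{\fg_{i}}(x_{i})=0$, which is part of the strong regularity of $x$. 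This is the content of Proposition \ref{prop:centralizers}: for a closed orbit that is neither $Q_{+,i}$ nor $Q_{-,i}$, the root $\beta=\alpha_{1}+\dots+\alpha_{i-2}$ gives a root space lying in $\fz_{\fg_{i-1}}(\fn_{i-1})\cap\fz_{\fg_{i}}(\fn)$, so no element of such a Borel can satisfy the centralizer condition. Without this step the count of components would be $\prod_{i=2}^{n+1} i=(n+1)!$ candidate Borels rather than $2^{n}$, and the decomposition in the theorem would fail. The rest of your argument (uniqueness of the Borel containing a regular nilpotent, the $2^{n}$ count, and the dimension bookkeeping for $A$-orbits) is sound.
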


\begin{rem}\label{rem:intro}
  Theorem 1.2  improves on \cite{Col1}, Theorem 5.2.  In \cite{Col1}, Theorem 5.2, the irreducible components of $\Phi^{-1}(0)_{sreg}$ are described as orbits of the algebraic group $Z_{G_{1}}(e_{1})\times\dots\times Z_{G_{n}}(e_{n})$, where $e_{i}\in\fg_{i}$ is the principal nilpotent Jordan matrix and $Z_{G_{i}}(e_{i})$ is the centralizer of $e_{i}$ in $G_{i}$.  The description of irreducible components
given by $\nilrad^{reg}\cong(\C^{\times})^{n}\times \C^{{n+1\choose 2}-n}$
 is considerably more explicit than the description as orbits (see Examples \ref{ex:Borelexam} and \ref{ex:Borelregexam}).

\end{rem}


To prove Theorem \ref{thm:bigintro}, we use the following characterization of $\Phi^{-1}(0)_{sreg}$ (Theorem \ref{d:sreg} and Remark \ref{rem_git}).
\begin{equation}\label{eq:introcons}
\begin{split}
&x\in\Phi^{-1}(0)_{sreg} \mbox{ if and only if for each } i=2,\dots, n+1:\\
&(1)\; x_{i}\in\fg_{i}, \, x_{i-1}\in\fg_{i-1} \mbox{ are regular nilpotent; and }\\
&(2)\; \fz_{\fg_{i-1}}(x_{i-1})\cap\fz_{\fg_{i}}(x_{i})=0, \end{split}
\end{equation}
where $\fz_{\fg_{i}}(x_{i})$ denotes the centralizer of $x_{i}\in\fg_{i}$.  We fix an $i$ with $2\leq i\leq n+1$ and study the two conditions in (\ref{eq:introcons}).  For concreteness, let us take $i=n+1$ and suppose that $x\in\fg$ satisfies the conditions in (\ref{eq:introcons}).  In particular, $x\in\fg$ is regular nilpotent and therefore contained in a unique Borel subalgebra $\fb_{x}$ of $\fg$.  The conditions in (\ref{eq:introcons}) are $K_{n+1}$-equivariant so that each Borel subalgebra in $K_{n+1}\cdot \fb_{x}$ contains elements satisfying these conditions.  It turns out that very few $K_{n+1}$-orbits contain such Borel subalgebras.  The first condition alone forces the $K_{n+1}$-orbit through $\fb_{x}$ to 
be closed.

\begin{thm}\label{thm:introthm1}(Proposition \ref{prop:closed} and Theorem \ref{prop:nilpsink})
If $x\in\fg$ is regular nilpotent and $x_{n}\in\fg_{n}$ is nilpotent, then $\fb_{x}\in Q$, where $Q$ is closed $K_{n+1}$-orbit. 
\end{thm}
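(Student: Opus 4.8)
The plan is to prove the two halves of the statement — first that $x \in \fg$ regular nilpotent with $x_n \in \fg_n$ nilpotent forces $K_{n+1} \cdot \fb_x$ to be closed (Proposition \ref{prop:closed}), then the sharper identification of $Q$ among closed $K_{n+1}$-orbits (Theorem \ref{prop:nilpsink}). The key structural input is the standard description of $K_{n+1} = GL(n,\C) \times GL(1,\C)$-orbits on $\B_{n+1}$ via the relative position of a Borel $\fb$ (equivalently its flag $F_\bullet$) with respect to the fixed data of the symmetric pair: the decomposition $\C^{n+1} = \C^n \oplus \C e_{n+1}$, or dually the pair consisting of the hyperplane $\C^n$ and the line $\C e_{n+1}$. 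A $K_{n+1}$-orbit is determined by the sequence of dimensions $\dim(F_j \cap \C^n)$ together with the positions at which $e_{n+1} \in F_j$; closed orbits are exactly those for which the flag is built compatibly from the decomposition, i.e. each $F_j$ is spanned by coordinate-type pieces adapted to $\C^n \oplus \C e_{n+1}$. Concretely there are $n+1$ closed orbits, indexed by the step $k$ at which the flag "uses up" the line $\C e_{n+1}$.

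**The first half: closedness.** First I would translate "$x$ regular nilpotent" into "$\fb_x$ is the unique Borel containing $x$, and the flag $F_\bullet = F_\bullet(x)$ is the flag of generalized kernels $F_j = \ker(x^j)$ (up to the usual reindexing), since a regular nilpotent has a single Jordan block." Then "$x_n \in \fg_n$ nilpotent" must be converted into a constraint on how $F_\bullet$ sits relative to $\C^n$. Writing $p : \C^{n+1} \to \C^n$ for the projection killing $e_{n+1}$, the matrix $x_n$ is the compression of $x$ to $\C^n$ in the sense $x_n = p \circ x|_{\C^n}$; the condition that $x_n$ be nilpotent (necessarily regular nilpotent in $\fg_n$, by the parallel argument or by \eqref{eq:introcons}) says the flag of $x_n$ on $\C^n$ is again a complete flag of kernels of powers. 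The heart of this half is a dimension-count / eigenvalue argument showing that $F_\bullet$ and the subspace $\C^n$ must be in "generic-free" relative position at every step compatible with a closed orbit — i.e. the function $j \mapsto \dim(F_j \cap \C^n)$ can only jump by the pattern forced when $F_\bullet$ is adapted to $\C^n \oplus \C e_{n+1}$. I would argue by contradiction: if $K_{n+1} \cdot \fb_x$ were not closed, then for some $j$ the intersection $F_j \cap \C^n$ would be "too large" (the flag would be tangent to $\C^n$), and I would show this forces a nontrivial common structure between the $x$-flag and the $x_n$-flag that violates regularity of $x_n$ or the rank conditions. This contradiction argument is where the actual work is, and I expect it to be the main obstacle: one must carefully bookkeep how the single Jordan block of $x$ interacts with the hyperplane, ruling out all the non-closed orbit types at once.

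**The second half: identifying $Q$.** Once closedness is established, it remains to pin down which of the $n+1$ closed $K_{n+1}$-orbits can occur. Here I would use more of the hypothesis: not just that $x_n$ is nilpotent, but (via \eqref{eq:introcons}, applied with $i = n+1$) that $\fz_{\fg_n}(x_n) \cap \fz_{\fg_{n+1}}(x) = 0$, and the fact that the construction is recursive — the same analysis applies to $x_n$ inside $\fg_n$ with respect to $K_n$. The expectation (matching the $2^n$ count and the role of $Q_{\pm}$ in \eqref{eq:introcons}) is that only the two "extreme" closed orbits survive: the orbit $Q_{+,n+1}$ of the upper-triangular Borel and the orbit $Q_{-,n+1}$ of the lower-triangular Borel, corresponding to $e_{n+1}$ entering the flag either at the first or at the last step. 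I would show the intermediate closed orbits are excluded because for them the centralizer intersection in \eqref{eq:introcons}(2) is forced to be nonzero, or equivalently the compression $x_n$ fails to be regular nilpotent: if $e_{n+1}$ enters $F_\bullet$ at an interior step $1 < k < n+1$, then the induced flag on $\C^n$ has a repeated dimension jump and $x_n$ cannot be a single Jordan block. Assembling these pieces gives $Q = Q_{+,n+1}$ or $Q_{-,n+1}$, both of which are closed, which is exactly the assertion $\fb_x \in Q$ with $Q$ a closed $K_{n+1}$-orbit. The generalization from $i = n+1$ to arbitrary $i$ is immediate by replacing $(\fg, \fg_n, K_{n+1})$ with $(\fg_i, \fg_{i-1}, K_i)$, since the hypotheses and the orbit combinatorics are identical at every level.
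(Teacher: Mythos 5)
There is a genuine gap at the crux of the argument. The statement you need to prove is precisely Proposition \ref{prop:closed}: that the hypotheses force $K_{n+1}\cdot\fb_x$ to be closed. Your plan is to translate ``$x_n$ is nilpotent'' into a condition on the relative position of the kernel flag of $x$ and the hyperplane $\C^n$, and then rule out the non-closed orbit types by a dimension count; but you explicitly defer this step (``this contradiction argument is where the actual work is, and I expect it to be the main obstacle'') and never supply it. That deferral is exactly where the content of the theorem lives: the compression $x_n=p\circ x|_{\C^n}$ does not act on the kernel flag of $x$ in any straightforward way, so ``$x_n$ nilpotent'' is not readily a flag-position condition, and it is not clear your bookkeeping would close. (Note also that the hypothesis is only that $x_n$ is nilpotent, not regular nilpotent, so appeals to ``regularity of $x_n$'' or to condition (2) of (\ref{eq:introcons}) are not available.) The paper sidesteps this entirely: since $\mathrm{Tr}(x)=\mathrm{Tr}(x_n)=0$, one gets $\pi_{\fk_{n+1}}(x)=x_n$, so the hypothesis becomes ``$\pi_{\fk_{n+1}}(x)$ is nilpotent,'' and the real theorem (Theorem \ref{prop:nilpsink}) is that for \emph{every} non-closed orbit and \emph{every} regular element $y$ of the nilradical, $\pi_{\fk_{n+1}}(y)$ is non-nilpotent. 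This is proved by conjugating $\fb_{i,j}$ to a standard pair, passing to a $\theta'$-stable parabolic $\fq=\fl+\fu$ whose relevant Levi block carries the \emph{open} orbit of a smaller symmetric subgroup (Lemma \ref{lem:qijopenlevi}), and then exhibiting by an explicit cofactor expansion that the upper-left $n\times n$ block of $\pi_{\fk_{n+1}}(x)$ has nonzero determinant. None of this reduction is anticipated in your sketch.

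A secondary point: your ``second half'' addresses the wrong statement. Theorem \ref{thm:introthm1} asserts only that the orbit is closed; pinning $Q$ down to $Q_{+,n+1}$ or $Q_{-,n+1}$ is Theorem \ref{thm:introthm2} (Proposition \ref{prop:centralizers}), which requires the additional centralizer hypothesis $\fz_{\fg_n}(x_n)\cap\fz_{\fg}(x)=0$ that is not part of the present statement. Importing that hypothesis here both proves something not asked and leaves the actually required first half incomplete.
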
  


Theorem \ref{thm:introthm1} allows us to focus our attention on the $(n+1)$ closed $K_{n+1}$-orbits in $\B_{n+1}$.  
\begin{thm}\label{thm:introthm2}(Proposition \ref{prop:centralizers})
If $x$ is a nilpotent element of $\fg$ satisfying condition (2) in (\ref{eq:introcons}) and $x\in\fb\in Q$, with $Q$ a closed $K_{n+1}$-orbit, then $Q=Q_{+,n+1}$ or $Q=Q_{-,n+1}$.  
\end{thm}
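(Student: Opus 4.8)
\textbf{Proof proposal for Theorem \ref{thm:introthm2}.}

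The plan is to leverage Theorem \ref{thm:introthm1}, which already restricts attention to the finitely many closed $K_{n+1}$-orbits in $\B_{n+1}$, and to rule out all such orbits except $Q_{+,n+1}$ and $Q_{-,n+1}$ by a centralizer computation. First I would record an explicit description of the closed $K_{n+1}$-orbits: since $K_{n+1} = GL(n,\C)\times GL(1,\C)$ is a symmetric subgroup, its closed orbits on $\B_{n+1}$ correspond to $\sigma$-stable Borel subalgebras containing a $\sigma$-stable Cartan subalgebra, where $\sigma$ is the involution with fixed points $K_{n+1}$. Concretely, up to $K_{n+1}$-conjugacy there are $n+1$ closed orbits, indexed by the position $k \in \{1,\dots,n+1\}$ in which the ``last'' coordinate direction $e_{n+1}$ (spanning the $GL(1,\C)$ factor) sits in the flag; the two extreme positions $k=1$ and $k=n+1$ give precisely $Q_{+,n+1}$ and $Q_{-,n+1}$ (the orbits of upper- and lower-triangular Borels, adjusting conventions). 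So it suffices to show that if $1 < k < n+1$, then no nilpotent $x \in \fb$ with $\fb$ in the $k$-th closed orbit can satisfy condition (2) of (\ref{eq:introcons}), i.e. $\fz_{\fg_n}(x_n) \cap \fz_{\fg_{n+1}}(x) \ne 0$ for all such $x$.

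The key step is the centralizer analysis. Fix a representative $\fb$ of the $k$-th closed orbit with its $\sigma$-stable Cartan $\fh$ (the diagonal matrices). If $x \in \fb$ is regular nilpotent in $\fg = \fgl(n+1,\C)$, then $\fz_{\fg}(x)$ is the $(n+1)$-dimensional space of polynomials in $x$; and since $x \in \fb$, the nilpotent $x$ looks (after conjugating $\fb$ to standard form by an element of $K_{n+1}$, which permutes coordinates by a permutation putting $e_{n+1}$ in slot $k$) like a regular nilpotent supported on the superdiagonal in a reordered basis. The point is to compute $x_{n+1-1} = x_n$, the upper-left $n\times n$ corner in the \emph{original} coordinates, and to test whether $\fz_{\fg_n}(x_n)$ meets $\fz_{\fg_{n+1}}(x) = \C[x]$ nontrivially. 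I would argue that when $e_{n+1}$ occupies an interior slot of the flag, the Borel $\fb$ necessarily has a ``block'' structure forcing $x$ to have a block upper-left $n\times n$ piece $x_n$ whose centralizer in $\fg_n$ contains a vector that also centralizes all of $x$ — for instance, because the regular nilpotent $x$ restricted to the invariant subspace spanned by the first $n$ coordinate vectors (in reordered basis, these form two ``chains''), and some power of $x$ together with the idempotent-like element projecting onto one chain lands in the intersection. A clean way to see the obstruction: for the interior orbits the Borel $\fb$ is not ``$\fg_i$-compatible'' in the sense that $\fb_n = \{y_n : y \in \fb\}$ fails to be a Borel of $\fg_n$ (contrast Proposition \ref{prop:borels}), and one shows directly that this compatibility failure produces the nonzero intersection of centralizers.

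A more structural alternative, which I would pursue in parallel, is to use the ``tower'' characterization: condition (2) of (\ref{eq:introcons}) at level $i=n+1$ combined with $x$ regular nilpotent is equivalent (by standard Gelfand–Zeitlin theory, cf. \cite{KW1, Col1}) to $x_n$ being regular nilpotent in $\fg_n$; but Theorem \ref{thm:introthm1} applied inductively — or rather the already-proven Proposition \ref{prop:borels} — shows that having $\fb_n$ a Borel subalgebra of $\fg_n$ forces the $K_{n+1}$-orbit of $\fb$ to be $Q_{+,n+1}$ or $Q_{-,n+1}$, since among closed orbits these are exactly the ones whose Borels project to Borels of $\fg_n$. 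Thus the argument reduces to: (a) $x$ nilpotent with condition (2) $\Rightarrow$ $x_n$ regular nilpotent in $\fg_n$; (b) $x_n$ regular nilpotent in $\fg_n$ and $\fb_x \supseteq x$ $\Rightarrow$ $(\fb_x)_n = \fb_n$ is the Borel of $\fg_n$ containing $x_n$, hence a Borel subalgebra; (c) among the $n+1$ closed $K_{n+1}$-orbits, only $Q_{\pm,n+1}$ have this projection property, by the explicit coordinate description above. I expect the main obstacle to be step (b): it is not automatic that the projection of the unique Borel containing $x$ equals the unique Borel containing $x_n$ — this requires knowing that $\fb_x$ is itself compatible with the tower, which in turn is exactly the kind of statement Proposition \ref{prop:borels} is designed to give, so the real work is checking that the hypotheses of that proposition are met, i.e. tracing through the flag-position combinatorics to confirm that non-extreme $k$ is genuinely excluded rather than merely not obviously included.
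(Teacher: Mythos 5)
There is a genuine gap, and it sits at the linchpin of both of your proposed routes. Your ``clean way'' in the first approach, and step (c) of your second approach, both rest on the claim that among the $n+1$ closed $K_{n+1}$-orbits only $Q_{+,n+1}$ and $Q_{-,n+1}$ consist of Borel subalgebras $\fb$ for which $\fb_{n}=\pi_{n}(\fb)$ is a Borel subalgebra of $\fg_{n}$. That claim is false: Proposition \ref{prop:borels} of the paper shows that for \emph{every} closed $K_{n+1}$-orbit $Q$ and every $\fb\in Q$, the projection $\pi_{n}(\fb)$ is a Borel subalgebra of $\fg_{n}$ (indeed, for the standard representative of any closed orbit one gets exactly $\fb_{+,n}$). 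So the ``projection property'' does not distinguish $Q_{\pm,n+1}$ from the intermediate closed orbits, and your reduction (a)--(c) cannot close. The failure mode of the intermediate orbits is not that $\fb_{n}$ fails to be a Borel, but that condition (2) of (\ref{eq:introcons}) fails for every nilpotent element of such a $\fb$.

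What remains is your direct centralizer computation, but as written it is only a sketch: you gesture at ``some power of $x$ together with the idempotent-like element projecting onto one chain'' without exhibiting an actual nonzero element of $\fz_{\fg_{n}}(x_{n})\cap\fz_{\fg}(x)$, and you also restrict to $x$ regular nilpotent with $\fz_{\fg}(x)=\C[x]$, whereas the statement only assumes $x$ nilpotent. The paper's Proposition \ref{prop:centralizers} does this step concretely and uniformly: for the representative $\fb_{i}$ ($1<i<n+1$) of an intermediate closed orbit, the root $\beta=\alpha_{1}+\dots+\alpha_{n-1}=\epsilon_{1}-\epsilon_{n}$ lies in the root system of $\fn=[\fb,\fb]$ and $\beta+\alpha$ is never a root for $\alpha$ a root of $\fn$; hence the root space $\fg_{\beta}=\C E_{1n}$ centralizes all of $\fn$, lies in $\fg_{n}$, and therefore lies in $\fz_{\fg_{n}}(x_{n})\cap\fz_{\fg}(x)$ for \emph{every} nilpotent $x\in\fb$ (nilpotent elements of $\fb$ lie in $\fn$). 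Producing such an explicit element --- or an equivalent root-combinatorial argument --- is the missing content of your proposal.
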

\noindent Theorems \ref{thm:introthm1} and \ref{thm:introthm2} force $\fb_{x}\in Q_{+,n+1}$ or $\fb_{x}\in Q_{-,n+1}$.   The proof of Theorem \ref{thm:bigintro} follows by applying this observation successively for each $i$.

It would be interesting to study other strongly regular fibres $\Phi^{-1}(c)_{sreg}$ using the theory of $K_{n+1}$-orbits on the flag variety.  As a step in this direction, we prove:
\begin{thm}\label{thm:introborels}
Every Borel subalgebra $\fb\subset\fg$ contains strongly regular elements.
\end{thm}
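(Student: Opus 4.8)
The plan is to proceed by induction on $n$, constructing for each Borel subalgebra $\fb \subset \fg = \fgl(n+1,\C)$ a strongly regular element of $\fb$ by using the characterization in (\ref{eq:introcons}) together with the theory of $K_{n+1}$-orbits on $\B_{n+1}$. The base case $n=0$ (or $n=1$) is immediate since $\fgl(1,\C)$ is abelian and every element is strongly regular. For the inductive step, fix $\fb$ and let $Q = K_{n+1}\cdot\fb$ be its $K_{n+1}$-orbit. The key structural input is the projection $\fb \mapsto \fb_n = \{x_n : x \in \fb\}$: when $Q$ is \emph{not} one of the orbits for which Proposition \ref{prop:borels} guarantees $\fb_n$ is a Borel, $\fb_n$ is still a parabolic subalgebra of $\fg_n$, and in general I would want to show that $\fb$ always contains an element $x$ with $x_n$ \emph{regular} in $\fg_n$ and with $\fz_{\fg_n}(x_n) \cap \fz_{\fg_{n+1}}(x_{n+1}) = 0$. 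The conditions in (\ref{eq:introcons}) at level $n+1$ only involve $x_n, x_{n+1}$, so once such an $x$ exists with $x_n$ landing in a \emph{given} Borel (or more generally a given regular locus) of $\fg_n$, I can then invoke the inductive hypothesis to arrange that $x_n$ is simultaneously strongly regular in $\fg_n$, and strong regularity of $x$ in $\fg$ follows from (\ref{eq:introcons}).

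Concretely, I would argue as follows. First, by the $K_{n+1}$-equivariance of the conditions and the fact that strong regularity is preserved under the $GL(n+1,\C)$-action appropriately, it suffices to treat one Borel in each $K_{n+1}$-orbit; but actually for \emph{every} Borel we want the statement, so equivariance only reduces us to a set of orbit representatives — which is fine since there are finitely many orbits. For a representative $\fb$, consider the generic element $x \in \fb$: I claim that for $x$ in a Zariski-dense subset of $\fb$, $x_{n+1} = x$ is regular in $\fg_{n+1}$ (true since the regular locus of $\fb$ is dense) and $x_n$ is regular in $\fg_n$. The second point requires that the projection $\pi_n : \fb \to \fg_n$, $x \mapsto x_n$, has image meeting $(\fg_n)_{reg}$; this is where I would examine the $K_{n+1}$-orbit structure — the image $\fb_n = \pi_n(\fb)$ contains a dense subset of a parabolic of $\fg_n$, and a parabolic always contains regular elements of $\fg_n$. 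Then the condition $\fz_{\fg_n}(x_n) \cap \fz_{\fg_{n+1}}(x_{n+1}) = 0$ is a nonemptiness-of-a-Zariski-open condition: for $x_n$ regular, $\fz_{\fg_n}(x_n)$ is $n$-dimensional, and one checks that for generic $x \in \fb$ this $n$-dimensional space meets the centralizer of $x$ (which is $(n+1)$-dimensional for $x$ regular) only in $0$. This last transversality is the analogue, without nilpotency, of Proposition \ref{prop:centralizers}, and I would prove it by exhibiting a single element of $\fb$ for which it holds (e.g.\ a suitable element with distinct eigenvalues, reducing to a combinatorial statement about the flag), then invoking openness.

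The main obstacle will be the last step: showing that the Zariski-open conditions cut out of $\fb$ by ``$x_n$ regular'' and ``$\fz_{\fg_n}(x_n) \cap \fz_{\fg}(x) = 0$'' are \emph{nonempty}, i.e.\ producing an explicit witness in each Borel $\fb$, and then threading this through all levels $i = 2, \dots, n+1$ simultaneously so that the inductive hypothesis can be applied coherently. The subtlety is that the inductive hypothesis gives a strongly regular element of $\fb_n$ only when $\fb_n$ is itself a Borel subalgebra; when the $K_{n+1}$-orbit of $\fb$ is such that $\fb_n$ is a \emph{proper} parabolic, one must instead work with the regular elements of $\fb_n$ directly and set up the induction to produce strongly regular elements of $\fg_n$ lying in a prescribed Cartan (or regular) subset rather than in a prescribed Borel — so I would reformulate the inductive statement as: ``for every Borel $\fb \subset \fgl(n+1,\C)$ and every regular $y \in \fg_n$ lying in $\fb_n$, there is a strongly regular $x \in \fb$ with $x_n = y$,'' which is strong enough to close the induction and which at each stage reduces to the two Zariski-open-nonempty conditions above. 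The genericity arguments are then routine; the real content is the explicit construction of the witness, for which using an element of $\fb$ with distinct eigenvalues and translating the centralizer-transversality into a statement about the relative position of the flag and the coordinate flag should suffice.
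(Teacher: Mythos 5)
Your plan has a genuine gap, and it misses the mechanism the paper actually uses. The central difficulty is that for a Borel $\fb$ whose $K_{n+1}$-orbit is not closed, the projection $\pi_{n}$ restricted to $\fb$ is not a Lie algebra homomorphism (products of last-column and last-row entries of $\fb$ contribute to the upper-left block), and $\fb_{n}=\pi_{n}(\fb)$ is not a Borel subalgebra of $\fg_{n}$ --- for the open orbit in $\fgl(3,\C)$ one computes $\fb_{2}=\fg_{2}$, for instance. Proposition \ref{prop:borels} is genuinely special to closed orbits. So your induction has nothing to induct on: the inductive hypothesis concerns Borel subalgebras of $\fg_{n}$, and $\fb_{n}$ is not one. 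Your proposed repair, ``for every Borel $\fb$ and every regular $y\in\fb_{n}$ there is a strongly regular $x\in\fb$ with $x_{n}=y$,'' is false as stated: by Theorem \ref{d:sreg}(2), strong regularity of $x$ forces $y=x_{n}$ to be strongly regular in $\fg_{n}$, not merely regular (already for $\fb=\fb_{+}$ in $\fgl(3,\C)$ and $y=\mathrm{diag}(0,1)$ there is no such $x$). Even after correcting the hypothesis on $y$, the existence of a suitable $x$ in the affine slice $\pi_{n}^{-1}(y)\cap\fb$ --- regular in $\fg$ and with $\fz_{\fg_{n}}(y)\cap\fz_{\fg}(x)=0$ --- is exactly the hard nonemptiness assertion, which you defer to ``an explicit witness.'' Producing that witness for an arbitrary flag in arbitrary position relative to the coordinate flags is the whole problem, not a routine afterthought.

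The paper sidesteps all of this by splitting the proof in two. First (Proposition \ref{p:standard}) it treats only the \emph{standard} Borels, those containing the diagonal Cartan $\fh$: by Theorem \ref{thm:areborels} these are exactly the $\borel$ attached to sequences of closed $K_{i}$-orbits, so Proposition \ref{prop:borels} applies at every level, each $\fb_{i}$ really is a Borel of $\fg_{i}$, and an explicit regular semisimple witness can be built by induction. Second, it globalizes with no further construction: the set $\B_{sreg}$ of Borels meeting $\fg_{sreg}$ is open, being the image of the open set $\mu^{-1}(\fg_{sreg})$ under the projection $\pi:\tilde{\fg}\to\B$ from the Grothendieck resolution, which is smooth, hence flat, hence open; the complement $Y$ is closed and $H$-stable, so if nonempty it would contain a closed $H$-orbit, and the closed $H$-orbits in $\B$ are precisely the standard Borels --- contradicting the first step. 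Some degeneration/openness principle of this kind, reducing arbitrary Borels to standard ones, is the missing idea in your outline; without it you are forced to confront arbitrary Borels head-on, which is where your argument stalls.
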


The paper is organized as follows. In Section \ref{s:background}, we recall some of the results of \cite{KW1}.  In Section \ref{s:Korbs}, we recall results 
concerning $K_{n+1}$-orbits on the flag variety, and
we prove Theorems \ref{thm:introthm1} and \ref{thm:introthm2}.  In Section \ref{s:components}, we construct the Borel subalgebras $\borel$ and prove Theorem \ref{thm:towerintro}.  At the end of that section, we prove Theorem \ref{thm:bigintro}, which is the main result of the paper.  In the final section, Section \ref{s:sregborels}, we prove Theorem \ref{thm:introborels}.  

We would like to thank Peter Trapa, Leticia Barchini, and Roger Zierau for interesting discussions.  The work
by the 
second author was partially supported by NSA grants H98230-08-0023 and
 H98230-11-1-0151.


\section{Notation and results of Kostant and Wallach}\label{s:background}

Let $\fg=\fgl(n+1,\C)$ be the Lie algebra of $(n+1) \times (n+1)$ complex matrices. If $x\in \fg$, let $x_i$ be
the upper left $i\times i$ corner of $x$, so the $kj$ matrix coefficient
$(x_i)_{kj}$ of $x_i$ is $(x)_{kj}$ 
  if $1 \le k, j \le i$,
and is zero otherwise.  For $i \le n+1$, let $\fg_{i} = \fgl (i,\C) \subset \fg$, regarded as 
the upper left $i\times i$ corner. Let $G_i \cong GL(i,\C)$ be the closed
Lie subgroup of $GL(n+1,\C)$ with Lie algebra $\fg_{i}$.  

Recall the Gelfand-Zeitlin collection of functions $J_{GZ}=\{ f_{i,j}(x): \, i=1,\dots, n+1,\, j=1,\dots, i\},$ where $f_{i,j}(x)=Tr(x_{i}^{j})$.  The collection $J_{GZ}$ is Poisson commutative with respect to the Lie-Poisson structure on $\fg$, and it generates a maximal Poisson commuting subalgebra of $\C[\fg]$ (see \cite{KW1}, Theorem 3.25).  
Let $\fa =\mbox{span} \{ \xi_f : f\in J_{GZ} \}$, where $\xi_f$ is the
Hamiltonian vector field of $f$ on $\fg$.  Then $\fa$ is an abelian Lie algebra, and further $\dim(\fa)=\dnone$ (see \cite{KW1},
Section 3.2).  Let $A$
be the simply connected holomorphic Lie group 
with Lie algebra $\fa$. By Section 3 of \cite{KW1}, the group $A\cong \C^{\dnone}$
integrates the action of $\fa$ on $\fg$. It follows from standard results in
symplectic geometry that $A\cdot x$ is isotropic in
the symplectic leaf $G\cdot x$ in $\fg$.

By definition, $x\in \fg$ is called {\it strongly regular}
if the set $\{ df(x) : f\in J_{GZ} \}$ is linearly independent
in $T^*_x(\fg)$. Let $\fg_{sreg}$ be the (open) set of strongly regular
elements of $\fg$ and recall that a regular element $x$ of $\fg$
is an element whose centralizer
$\fz_{\fg} (x)$ has dimension $n+1$. By a well-known result of Kostant \cite{K},
 if $x \in \fg_{sreg}$, $x_k$ is regular for all $k$ (\cite{KW1}, Proposition
2.6).

We give alternate characterizations of the strongly
regular set in $\fg$. 


\begin{thm}\cite{KW1}
\label{d:sreg}
Let $x\in \fg$. Then the following are equivalent.

\noindent (1) $x$ is strongly regular.

\noindent (2) $x_{i}\in\fg_{i}$ is regular for all $i$, $1\leq i\leq n+1$ and $\fz_{\fg_{i}}(x_{i})\cap \fz_{\fg_{i+1}}(x_{i+1})=0$ for all $1\leq i\leq n$.

\noindent (3) $\dim(A\cdot x)=\dim(A) = \dnone$ and $A\cdot x$ is
Lagrangian in $G\cdot x$.
\end{thm}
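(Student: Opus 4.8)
The plan is to make everything concrete through the trace form. I would identify $\fg \cong \fg^*$ via $\langle y,z\rangle = \Tr(yz)$, so that each differential $df(x)$ becomes a gradient $\nabla f(x)\in\fg$, and compute directly that
\[
\nabla f_{i,j}(x) = j\,x_i^{j-1},
\]
regarded as an element of $\fg_i\subset\fg$ (nonzero only in the upper-left $i\times i$ block). Thus $x$ is strongly regular exactly when the $\binom{n+2}{2}$ vectors $\{x_i^{j-1}:1\le i\le n+1,\ 1\le j\le i\}$ are linearly independent in $\fg$. For fixed $i$ these are $I_i,x_i,\dots,x_i^{i-1}$, which are independent iff $x_i$ has minimal polynomial of degree $i$, i.e. iff $x_i$ is regular, and then they span $Z_i:=\fz_{\fg_i}(x_i)=\C[x_i]\subset\fg_i\subset\fg$, a subspace of dimension $i$. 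Hence strong regularity is equivalent to: every $x_i$ is regular and the sum $Z_1+\cdots+Z_{n+1}$ is direct (its dimension would then be $\sum_{i=1}^{n+1} i=\binom{n+2}{2}$). This already yields $(1)\Rightarrow(2)$, since directness forces $Z_i\cap Z_{i+1}=0$, which is precisely $\fz_{\fg_i}(x_i)\cap\fz_{\fg_{i+1}}(x_{i+1})=0$.

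The heart of $(2)\Rightarrow(1)$ is to show that the \emph{consecutive} conditions $Z_{i}\cap Z_{i-1}=0$ already force the whole sum to be direct. The key is a block computation: if $z\in\fz_{\fg_i}(x_i)$ happens to lie in $\fg_{i-1}$ (last row and column zero), then writing $x_i=\left(\begin{smallmatrix} x_{i-1} & b\\ c & d\end{smallmatrix}\right)$ and $z=\left(\begin{smallmatrix} z' & 0\\ 0 & 0\end{smallmatrix}\right)$, the vanishing of $[z,x_i]$ forces in particular $[z',x_{i-1}]=0$, so $z\in\fz_{\fg_{i-1}}(x_{i-1})=Z_{i-1}$. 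Since $Z_1+\cdots+Z_{i-1}\subseteq\fg_{i-1}$, this gives $Z_i\cap(Z_1+\cdots+Z_{i-1})\subseteq Z_i\cap\fg_{i-1}=Z_i\cap Z_{i-1}$, and an induction on $i$ then shows that $Z_i\cap Z_{i-1}=0$ for all $i$ implies $Z_1\oplus\cdots\oplus Z_{n+1}$ is direct, i.e. strong regularity. I expect this block lemma and induction to be the main obstacle, since it is exactly where the seemingly weak pairwise-consecutive hypothesis of $(2)$ is upgraded to global linear independence.

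For the equivalence with $(3)$ I would pass to symplectic geometry. The tangent space is $T_x(A\cdot x)=\operatorname{span}\{\xi_f(x):f\in J_{GZ}\}=[x,V]$, where $V=\operatorname{span}\{\nabla f_{i,j}(x)\}$ and $\xi_f(x)=[x,\nabla f(x)]$ is the standard Lie--Poisson Hamiltonian field; by rank--nullity applied to $\ad_x|_V$ (whose kernel is $V\cap\fz_\fg(x)$) we get $\dim(A\cdot x)=\dim V-\dim(V\cap\fz_\fg(x))$. Assuming $(1)$, all gradients are independent so $\dim V=\binom{n+2}{2}$, and the top-block gradients $x^{j-1}$ span $\C[x]=\fz_\fg(x)$ (as $x$ is regular), so $V\supseteq\fz_\fg(x)$ and $\dim(V\cap\fz_\fg(x))=n+1$. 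Hence $\dim(A\cdot x)=\binom{n+2}{2}-(n+1)=\dnone=\dim A$. Since $A\cdot x$ is isotropic in $G\cdot x$ (granted in the text) and $\dim(G\cdot x)=(n+1)^2-(n+1)=2\dnone$, an isotropic submanifold of half-dimension is Lagrangian, giving $(3)$.

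Conversely, assuming $(3)$, isotropy gives $\dim(A\cdot x)\le\tfrac12\dim(G\cdot x)$, and $\dim(A\cdot x)=\dnone$ forces $\dim\fz_\fg(x)\le n+1$; as the centralizer always has dimension $\ge n+1$, equality holds and $x$ is regular. Then $\fz_\fg(x)=\C[x]\subseteq V$, so $\dim(V\cap\fz_\fg(x))=n+1$, and $\dim(A\cdot x)=\dim V-(n+1)=\dnone$ yields $\dim V=\binom{n+2}{2}$. As there are only $\binom{n+2}{2}$ gradients, they must be linearly independent, which is $(1)$. Combining $(1)\Leftrightarrow(2)$ with $(1)\Leftrightarrow(3)$ then establishes all three equivalences.
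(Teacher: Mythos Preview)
The paper does not give a proof of this theorem; it is quoted from \cite{KW1} as background (the equivalence $(1)\Leftrightarrow(2)$ is essentially Theorem~2.7 there, and $(1)\Leftrightarrow(3)$ is part of the material around Theorems~3.23 and~3.36). So there is no proof in the present paper to compare against.

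That said, your argument is correct and is essentially the original one. The computation $\nabla f_{i,j}(x)=jx_i^{\,j-1}$, the observation that for fixed $i$ these span $\fz_{\fg_i}(x_i)$ precisely when $x_i$ is regular, and the block lemma $Z_i\cap\fg_{i-1}\subseteq Z_{i-1}$ (whence $Z_i\cap(Z_1+\cdots+Z_{i-1})\subseteq Z_i\cap Z_{i-1}$, so the sum is direct by induction) are exactly the ingredients of Kostant--Wallach's Theorem~2.7. For $(1)\Leftrightarrow(3)$, your rank--nullity computation $\dim(A\cdot x)=\dim V-\dim(V\cap\fz_\fg(x))$ together with $\fz_\fg(x)=\C[x]\subseteq V$ when $x$ is regular is clean and correct. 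One minor simplification in $(3)\Rightarrow(1)$: the Lagrangian hypothesis already gives $\dim(G\cdot x)=2\dnone$ directly, hence $\dim\fz_\fg(x)=n+1$, so you need not route through the isotropy inequality.
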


By (3) of Theorem \ref{d:sreg} and Theorem 3.36 of \cite{KW1}, 
strongly regular $A$-orbits form the leaves of a polarization of an open subvariety of regular $G\cdot x$.   For this reason, we study the geometry of the $A$-action on $\fg_{sreg}$.  It is useful to consider the moment map for the Gelfand-Zeitlin integrable system, which we refer to as the Kostant-Wallach map.  

\begin{equation}\label{eq:KWmap}
\Phi:\fg\mapsto \C^{{n+2\choose 2}},\; \Phi(x)=(f_{1,1}(x), \dots, f_{n+1,n+1}(x)).
\end{equation}
For $z\in \fg_i$, let $\sigma_i(z)$ equal the collection of
$i$ eigenvalues of $z$ counted with repetitions,
 where here we regard $z$ as an $i\times i$ matrix.

\begin{rem}\label{rem_git}
If $x, y \in \fg$, then $\Phi(x)=\Phi(y)$ if and only if
$\sigma_i(x_i)=\sigma_i(y_i)$ for $i=1, \dots, n+1$.  In particular, $\Phi(x)=(0,\dots, 0)$ if and only if $x_{i}$ is nilpotent for $i=1,\dots, n+1$. 
\end{rem}

It follows from \cite{KW1}, Theorem 2.3 that $\Phi$ is surjective and that for any $c\in\C^{{n+2\choose 2}}$ the variety  $\Phi^{-1}(c)_{sreg}:=\Phi^{-1}(c)\cap\fg_{sreg}$ is nonempty.  By \cite{KW1}, Theorem 3.12, the irreducible components of the variety $ \Phi^{-1}(c)_{sreg}$ coincide with strongly regular $A$-orbits, $A\cdot x$ for $x\in\Phi^{-1}(c)_{sreg}$.  Thus, we can understand the $A$-orbit structure of $\fg_{sreg}$ by studying the geometry of the strongly regular fibres $\Phi^{-1}(c)_{sreg}$.  


\section{The nilfibre and $K$-orbits on the flag variety}\label{s:Korbs}
We study the strongly regular nilfibre $\Phi^{-1}(0)_{sreg}$.  In this section, we find the Borel subalgebras of $\fg_{i}$ which contain elements $x\in\fg_{i}$ satisfying the conditions: 
  \begin{equation}\label{con:theconditions}
\begin{split}
&(1)\; x,\, x_{i-1}\in\fg_{i-1} \mbox{ are regular nilpotent.}\\
&(2)\; \fz_{\fg_{i-1}}(x_{i-1})\cap\fz_{\fg_{i}}(x)=0.\end{split}
\end{equation}
(c.f. Equations (\ref{eq:introcons}).)  In the following sections, we develop an inductive construction that uses these Borel subalgebras to construct Borel subalgebras of $\fg$ that contain elements of $\Phi^{-1}(0)_{sreg}$. 
  
   
\begin{nota} \label{n:thetadef}
We let $\theta$ be the inner automorphism of $\fg_{i}$ given by 
conjugation by the diagonal matrix $c=\mbox{diag}[1,\dots, 1,-1]$, that is, $\theta(x)=cxc^{-1}$.
\end{nota}

   For each $i$, $1\leq i\leq n+1$, let $\mathcal{B}_{i}$ denote the flag variety of $\fg_{i}$.   We let $\fk_{i}$ denote the set of $\theta$ fixed points in $\fg_{i}$ and let $K_{i}$ be the corresponding algebraic subgroup of
$GL(i,\C)$.  Then $\fk_{i}=\fgl(i-1,\C)\oplus\fgl(1,\C)$ consists of block diagonal matrices with one $(i-1)\times (i-1)$ block in the upper left corner and one $1\times 1$ block in the $(i,i)$-entry and the group $K_{i}=GL(i-1,\C)\times GL(1,\C)$ is the set of invertible elements of $\fk_{i}$.  
   
As we observed in the introduction, the set of Borel subalgebras of $\fg_{i}$ containing
elements satisfying (\ref{con:theconditions}) is $K_{i}$-stable.  In this section, we describe the $K_{i}$-orbits on $\mathcal{B}_{i}$ through such Borel subalgebras.  
In Section \ref{s:cond1}, we show that for a Borel subalgebra to contain elements satisfying condition 
(1) in (\ref{con:theconditions}), its $K_{i}$-orbit must be closed.
In Section \ref{s:cond2}, we show that if we also require condition (2), 
the Borel subalgebra must be in
 the $K_{i}$-orbit
through upper or lower triangular matrices.

   
   \begin{nota} \label{nota:standard}
   Throughout the paper, we will make use of the following notation for flags in $\C^{n+1}$.  Let 
   $$
  \mathcal{F}=( V_{0}=\{0\}\subset V_{1}\subset\dots\subset V_{i}\subset\dots\subset V_{n}=\C^{n+1}).
   $$
   be a flag in $\C^{n+1}$, with $\dim V_{i}=i$ and $V_{i}=\mbox{span}\{v_{1},\dots, v_{i}\}$, with each $v_{j}\in\C^{n+1}$.  We will denote the flag $\mathcal{F}$ as follows:
   $$
   v_{1}\subset  v_{2}\subset\dots\subset v_{i}\subset v_{i+1}\subset\dots\subset v_{n+1}. 
   $$
   We denote the standard ordered basis of $\C^{n+1}$ by $\{e_{1},\dots, e_{n+1}\}$. For $1 \le i, j \le n+1$, let $E_{ij}$ be the matrix with $1$
in the $(i,j)$-entry and $0$ elsewhere. 
   
    We also use the following standard notation.  Let $\fh\subset\fg$ be the standard Cartan subalgebra of diagonal matrices.  Let $\epsilon_{i}\in\fh^{*}$ be the linear functional whose action on $\fh$ is given by 
 $$
 \epsilon_{i}(\mbox{diag}[h_{1},\dots, h_{i},\dots, h_{n}])=h_{i}. 
 $$
Let $\alpha_{i}=\epsilon_{i}-\epsilon_{i+1}$, for $1\leq i\leq n$, be the standard simple roots. 
   \end{nota}
   
   \subsection{Basic Facts about $K_{n+1}$-orbits on $\B_{n+1}$}\label{s:basicfacts}
   Our parameterization of $K_{n+1}$-orbits on $\B_{n+1}$ 
follows that of \cite{Yam}, Section 2. 
 For the general case of orbits of a symmetric subgroup on a generalized
flag variety, see
 \cite{RSexp},\cite{Vg}, \cite{Sp}.  
   
   In \cite{Yam}, Yamamoto gives explicit representatives for the 
$K_{n+1}$-orbits on $\B_{n+1}$ in terms of stabilizers of certain flags
 in $\C^{n+1}$.  More precisely,  there are 
$(n+1)$ closed $K_{n+1}$-orbits $Q_{i}=K_{n+1}\cdot \fb_{i}$ on $\B_{n+1}$
where $i=1, \dots, n+1$, and the Borel subalgebra $\fb_{i}$ 
 is the stabilizer of the following flag in $\C^{n+1}$:
   $$
   e_{1}\subset \dots\subset \underbrace{e_{n+1}}_{i}\subset e_{i}\subset \dots\subset e_{n}.
   $$
   Note that if $i=n+1$, then $\fb_{n+1}=\fb_{+}$, the Borel subalgebra of the $(n+1)\times (n+1)$ upper triangular matrices in $\fg$ which stablilizes the
standard flag 
\begin{equation} \label{eq:stdflag}
   {\mathcal{F}}_+ = ( e_{1} \subset \dots \subset e_{n+1}).
\end{equation}
If $i=1$, then $K_{n+1}\cdot \fb_{1}=K_{n+1}\cdot \fb_{-}$, where $\fb_{-}$ is the Borel subalgebra of lower triangular matrices in $\fg$. 
   
   There are $\dnone$ $K_{n+1}$-orbits in $\B_{n+1}$ which are not closed.
  They are of the form $Q_{i,j}=K_{n+1}\cdot \fb_{i,j}$
 for $1\leq i<j\leq n+1$, where $\fb_{i,j}$ 
is the stabilizer of the flag in $\C^{n+1}$:
   $$
  {\mathcal{F}}_{i,j} =   (e_{1}\subset \dots\subset \underbrace{e_{i}+e_{n+1}}_{i}\subset e_{i+1}\subset \dots \subset e_{j-1} \subset \underbrace{e_{i}}_{j}\subset e_{j} \subset \dots \subset e_{n}).
$$
 The unique open $K_{n+1}$-orbit $Q_{1, n+1}$
 on $\B_{n+1}$ is generated by the stabilizer of the flag 
 $$
 e_{1}+e_{n+1}\subset e_{2}\subset\dots \subset e_{n}\subset e_{1}.  
 $$

In order to more easily understand the action of $\theta$ on the
roots of $\fb_{i,j}$, we replace the pair $(\fb_{i,j}, \theta)$
with the equivalent pair $(\fb_+, \theta^\prime)$, where $\theta^\prime$
is an involution of $\fg$ (the pair $\fb_+ \supset \fh$ is a standard pair
with respect to $\theta$, in the language of \cite{RSexp}). To do this,
let  
\begin{equation} \label{eq:vdef} 
v=wu_{\alpha_{i}} \sigma,
\end{equation} 
where $w$ and $\sigma$ are the permutation matrices 
corresponding respectively to the cycles  $(n+1\, n\dots i)$ and $ (i+1\, i+2\dots j)$,
and $u_{\alpha_{i}}$ is the Cayley transform matrix such that
$$
u_{\alpha_{i}}(e_{i})=e_{i} + e_{i+1}, \ 
u_{\alpha_{i}}(e_{i+1})=-e_{i} + e_{i+1}, \
u_{\alpha_{i}}(e_{k})=e_{k}, k\not= i, i+1.
$$
It is easy to verify that $v({\mathcal{F}}_+)={\mathcal{F}}_{i,j}$, and thus
$\Ad(v)(\fb_{+})=\fb_{i,j}$.  We can define a new involution
$\theta^{\prime}:\fg\to \fg$ by
\begin{equation}\label{eq:thetaprimedef}
\theta^{\prime}=\Ad(v^{-1})\circ\theta\circ\Ad(v)=\Ad(v^{-1}\theta(v))\circ\theta.
\end{equation}
By a routine computation
\begin{equation}\label{eq:vij}
v^{-1} \theta(v) = \tau_{i, j} t,
\end{equation}
where $\tau_{i, j}$ is the permutation matrix for the transposition
$(i\, j)$ and $t$ is a diagonal matrix with $t^2$ equal to the
identity.

\begin{rem}\label{r:kprime}
If $K^{\prime}=G^{\theta^{\prime}}$, then $vK^{\prime}v^{-1}=K$,
and if $\fk^{\prime}$ denotes the Lie algebra of $K^{\prime}$, then
$\fk = \Ad(v)(\fk^{\prime})$.
\end{rem}

%
%

Let $\fq$ be the standard parabolic generated by the Borel $\fb_{+}$ and
the  negative simple root spaces 
$\fg_{-\alpha_{i}}, \fg_{-\alpha_{i+1}}, \dots, \fg_{-\alpha_{j-1}}$.
Then $\fq$ has Levi decomposition $\fq = \fl + \fu$, with $\fl$ consisting
of block diagonal matrices of the form
\begin{equation} \label{eq:levifactor}
\fl=\underbrace{\fgl(1,\C)\oplus\dots\oplus \fgl(1,\C)}_{i-1 \mbox{ factors}}\oplus\fgl(j+1-i,\C)\oplus\underbrace{ \fgl(1,\C)\oplus\dots\oplus \fgl(1,\C)}_{n+1-j\mbox{ factors}}.
\end{equation}

\begin{rem} \label{r:thetastablefactors}
Since the permutation matrix corresponding to the transposition
$(i\, j)$ is in the Levi subgroup $L$ with Lie algebra $\fl$,
 the Lie subalgebras
$\fq, \fl$, $\fu$, and $\fgl(j+1-i,\C)$ are $\theta^{\prime}$-stable 
by Equations (\ref{eq:thetaprimedef}) and (\ref{eq:vij}).
\end{rem}

Let $\tilde{\theta}$ denote the restriction of $\theta^{\prime}$
 to $\fgl(j+1-i,\C)$ and let $\tilde{\fk}=\fgl(j+1-i,\C)^{\tilde{\theta}}$
 denote the fixed subalgebra of $\tilde{\theta}$
with corresponding algebraic group
  $\tilde{K}=GL(j+1-i,\C)^{\tilde{\theta}}$.
Let $\fb_{+,j+1-i}$ be the Borel subalgebra of upper triangular matrices
in $\fgl(j+1-i,\C)$, and let $\theta_{j+1-i}$ be the involution of
$\fgl(j+1-i, \C)$ with fixed
set $\fk_{j+1-i}$.

\begin{lem} \label{lem:qijopenlevi}
\par\noindent 1)There is $\tilde{v} \in GL(j+1-i,\C)$ such that 
$\tilde{\theta}=\Ad(\tilde{v}^{-1})\circ \theta_{j+1-i} \circ \Ad(\tilde{v})$.
Further,  
the symmetric subgroup $\tilde{K} \cong K_{j+1-i}$.
\par\noindent 2) $\Ad(\tilde{v})\fb_{+,j+1-i}$
is in the open orbit of $K_{j+1-i}$ on  $\B_{j+1-i}$.
\end{lem}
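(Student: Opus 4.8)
The plan is to make the involution $\tilde{\theta}$ on $\fgl(j+1-i,\C)$ completely explicit; once that is done, both assertions follow from elementary linear algebra. Since $\theta = \Ad(c)$ with $c=\mbox{diag}[1,\dots,1,-1]$, formulas (\ref{eq:thetaprimedef}) and (\ref{eq:vij}) give $\theta^{\prime}=\Ad(v^{-1}cv)=\Ad(\tau_{i,j}\,t\,c)$. A routine computation of the action of $v=wu_{\alpha_{i}}\sigma$ on the standard basis --- one checks that $v$ fixes $e_{k}$ for $k<i$ and for $i<k<j$, sends $e_{i}\mapsto e_{i}+e_{n+1}$ and $e_{j}\mapsto e_{i}-e_{n+1}$, and sends $e_{k}\mapsto e_{k-1}$ for $k>j$ --- shows that $v^{-1}\theta(v)$ interchanges $e_{i}$ and $e_{j}$ and negates $e_{n+1}$, so the diagonal matrix $t$ of (\ref{eq:vij}) is $\mbox{diag}[1,\dots,1,-1]$ with its $-1$ in the last slot. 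Hence $tc=1$ and $\theta^{\prime}=\Ad(\tau_{i,j})$, i.e. conjugation by the permutation matrix of the transposition $(i\,j)$. Restricting to the $\theta^{\prime}$-stable factor $\fgl(j+1-i,\C)$ lying in rows and columns $i,\dots,j$ (Remark \ref{r:thetastablefactors}) then yields $\tilde{\theta}=\Ad(P_{0})$, where $P_{0}\in GL(j+1-i,\C)$ is the permutation matrix that exchanges the first and last basis vectors of the block and fixes the rest. This identification is the crux of the argument.

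For part (1): $P_{0}^{2}=1$, and $P_{0}$ has eigenvalue $1$ with multiplicity $(j+1-i)-1$ --- eigenvectors being the fixed basis vectors together with the sum of the two exchanged ones --- and eigenvalue $-1$ with multiplicity $1$, eigenvector their difference. Therefore $P_{0}$ is conjugate in $GL(j+1-i,\C)$ to $s:=\mbox{diag}[1,\dots,1,-1]$, and since $\theta_{j+1-i}=\Ad(s)$, any $\tilde{v}$ with $\tilde{v}^{-1}s\,\tilde{v}=P_{0}$ satisfies $\tilde{\theta}=\Ad(\tilde{v}^{-1})\circ\theta_{j+1-i}\circ\Ad(\tilde{v})$. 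Consequently $\tilde{K}=GL(j+1-i,\C)^{\tilde{\theta}}=\tilde{v}^{-1}K_{j+1-i}\,\tilde{v}\cong K_{j+1-i}$.

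For part (2): from $K_{j+1-i}=\tilde{v}\,\tilde{K}\,\tilde{v}^{-1}$, the $K_{j+1-i}$-orbit of $\Ad(\tilde{v})\fb_{+,j+1-i}$ is the image under $\Ad(\tilde{v})$ of the $\tilde{K}$-orbit of $\fb_{+,j+1-i}$, and $\Ad(\tilde{v})$ is an automorphism of $\B_{j+1-i}$, so it preserves openness of orbits; thus it suffices to show $\tilde{K}\cdot\fb_{+,j+1-i}$ is open, i.e. $\fb_{+,j+1-i}+\tilde{\fk}=\fgl(j+1-i,\C)$. Equivalently, the projection of $\tilde{\fk}=\{X:\,P_{0}X=XP_{0}\}=\{X:\,X_{k\ell}=X_{\sigma(k)\sigma(\ell)}\}$ --- with $\sigma$ the transposition of the first and last indices of the block --- onto the strictly lower triangular matrices must be surjective. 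A short case analysis, according to whether $k$ or $\ell$ is an endpoint of the block, shows that for every strictly lower triangular position $(k,\ell)$ the position $(\sigma(k),\sigma(\ell))$ is either $(k,\ell)$ itself or weakly upper triangular. Hence $\sigma$ identifies no two distinct strictly lower positions, every strictly lower entry of an element of $\tilde{\fk}$ may be prescribed independently, and the projection is onto; this gives part (2).

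The step I expect to require real work is the opening one --- establishing $\theta^{\prime}=\Ad(\tau_{i,j})$ via the explicit computation of $v$, with separate attention to the degenerate configurations $i=1$, $j=i+1$, and $j=n+1$. After that, the eigenvalue count for $P_{0}$ and the surjectivity check for $\tilde{\fk}$ are both short and formal.
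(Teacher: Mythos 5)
Your proof is correct, but it takes a genuinely different route from the paper's on both halves of the lemma. For part (1), the paper writes down an explicit $\tilde{v}=\tilde{w}u_{\alpha_{i}}\sigma$ mirroring the construction of $v$ in Equation (\ref{eq:vdef}) and checks the conjugation identity by direct computation; you instead sharpen Equation (\ref{eq:vij}) to $t=c$ (your computation of $v$ on the standard basis is right, including the boundary case $j=n+1$), conclude $\theta^{\prime}=\Ad(\tau_{i,j})$ and hence $\tilde{\theta}=\Ad(P_{0})$, and then get the existence of $\tilde{v}$ abstractly from the fact that the involutions $P_{0}$ and $\mathrm{diag}[1,\dots,1,-1]$ in $GL(j+1-i,\C)$ have the same eigenvalue multiplicities. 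For part (2), the paper identifies the flag stabilized by $\Ad(\tilde{v})\fb_{+,j+1-i}$ and matches it against Yamamoto's list of orbit representatives, whereas you verify openness infinitesimally via the tangent-space criterion $\fb_{+,j+1-i}+\tilde{\fk}=\fgl(j+1-i,\C)$, observing that the transposition of the first and last block indices sends each strictly lower-triangular position either to itself or to a strictly upper-triangular one, so the projection of $\tilde{\fk}$ onto the strictly lower-triangular part is onto. Your version is self-contained (it does not lean on the classification of $K$-orbits for the openness claim) and in fact proves part (2) for \emph{every} $\tilde{v}$ satisfying part (1), which is all that the application in Theorem \ref{prop:nilpsink} uses; what the paper's construction buys is an explicit $\tilde{v}$ and an explicit flag representative of the open orbit, neither of which is needed downstream beyond the openness statement itself.
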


\begin{proof} 
Let $\tilde{v}=\tilde{w} u_{\alpha_{i}} \sigma$,
 where $\sigma$ and $u_{\alpha_{i}}$ are the same as in the definition
of $v$ from Equation (\ref{eq:vdef}) and $\tilde{w}$ is the 
permutation matrix corresponding to the cycle $(j\, j-1\dots i)$.
Then a routine computation shows that 
$\tilde{\theta}=\Ad(\tilde{v}^{-1})\circ \theta_{j+1-i} \circ \Ad(\tilde{v})$,
and it follows that $\tilde{K} \cong K_{j+1-i}$.
It is easy to show that $\Ad(\tilde{v})\fb_{+,j+1-i}$ is the
stabilizer of the flag
$$
e_{i}+e_{j}\subset e_{i+1}\subset\dots\subset e_{j-1}\subset e_{i},
$$
in $\C^{j+1-i}$, where we take $\{e_{i},\dots, e_{j}\}$ 
as an ordered basis for $\C^{j+1-i}$ (see Notation \ref{nota:standard}).
Hence, the $K_{j+1-i}$-orbit through $\Ad(\tilde{v})\fb_{+,j+1-i}$
is open by the results from \cite{Yam} discussed at the beginning
of this section.
\end{proof}

   \subsection{Analysis of the first condition in (\ref{con:theconditions})}\label{s:cond1}

  Write $\fg=\fkn\oplus\fpn$ where $\fkn$ is the $1$-eigenspace for the involution $\theta$ and $\fpn$ is the $-1$-eigenspace.  Let $\pi_{\fkn}:\fg\to\fkn$ denote the projection of $\fg$ onto $\fkn$ along $\fpn$ and let $\mathcal{N}_{\fkn}$ denote the nilpotent cone in $\fkn$.  For a Borel subalgebra $\fb$ with
nilradical $\fn$, let $\fn^{reg}$ denote the regular nilpotent elements of
$\fn$.  For $\fb=\fb_+$, by \cite{KosWh},
\begin{equation}\label{eq:stdregnil}
\fn_+^{reg} = \{ \sum_{i=1}^{n} a_{i} E_{ii+1}  + y : 
a_{1} \cdots a_{n}\not= 0, y \in [\fn_+,\fn_+] \}.
\end{equation}
  The main result of this section is the following proposition.

\begin{prop}\label{prop:closed}
Suppose that $x\in\fg$ satisfies (1) in  (\ref{con:theconditions}) and suppose that $\fb\subset\fg$ is the unique Borel subalgebra containing $x$.  Then $\fb\in Q$, where $Q$ is a closed $K_{n+1}$-orbit in $\B_{n+1}$.
\end{prop}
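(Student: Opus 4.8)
The plan is to show that if $\fb$ contains a regular nilpotent element $x$ with $x_n\in\fg_n$ nilpotent, then the $K_{n+1}$-orbit $Q=K_{n+1}\cdot\fb$ is closed. I would argue by contradiction: suppose $Q$ is not closed. Since a $K_{n+1}$-orbit on $\B_{n+1}$ is closed if and only if the corresponding parabolic $\theta$-invariance forces $\fb\cap\theta(\fb)$ to be as large as possible, equivalently (by the general theory, \cite{RSexp}) if and only if $\fb$ is $\theta$-stable after conjugation by $K_{n+1}$ into a standard position, the failure of closedness means the involution $\theta$ (or rather the conjugate involution $\theta'$ on $\fb_+$ as set up in Section \ref{s:basicfacts}) acts on the root spaces of $\fb$ with at least one ``complex'' or ``non-compact imaginary'' simple root. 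Concretely, after replacing $(\fb,\theta)$ by $(\fb_+,\theta')$ via $\Ad(v)$ with $v=wu_{\alpha_i}\sigma$ as in Equation (\ref{eq:vdef}), and using $x'=\Ad(v^{-1})x$, I would observe that $x'$ is a regular nilpotent element of $\fb_+$, so by Equation (\ref{eq:stdregnil}) it has all superdiagonal coordinates $a_1,\dots,a_n$ nonzero modulo $[\fn_+,\fn_+]$.

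The key computational step is then to track the condition ``$x_n$ nilpotent'' back through the conjugation by $v^{-1}$ and to see it is incompatible with the presence of such a simple root. I would use the description of $v^{-1}\theta(v)=\tau_{i,j}t$ from Equation (\ref{eq:vij}) and the Levi picture of Remark \ref{r:thetastablefactors}: the only $\theta'$-noncompactness sits inside the middle block $\fgl(j+1-i,\C)$, and by Lemma \ref{lem:qijopenlevi} the induced orbit there is the \emph{open} $K_{j+1-i}$-orbit. So the non-closedness of $Q$ is exactly the statement that, up to the $K$-action, the flag defining $\fb$ must involve one of the ``mixed'' basis vectors $e_i+e_{n+1}$ appearing at position $i<n+1$ as in the flag ${\mathcal F}_{i,j}$. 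I would then compute the upper-left $n\times n$ corner of a generic regular nilpotent element lying in such a $\fb$ (or in $\Ad(k)\fb$ for $k\in K_{n+1}$) and show its characteristic polynomial is not $t^n$: the ``extra'' term coming from the $e_{n+1}$-component of the mixed vector contributes a nonzero entry to $x_n$ that, combined with the nonvanishing of the superdiagonal entries forced by regularity, produces a nonzero trace of some power $x_n^k$. Equivalently, one shows $x_n$ is regular (by Kostant's result, since $x\in\fg_{sreg}$ would be the relevant case, but here we only need $x$ regular nilpotent) but \emph{not} nilpotent.

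A cleaner way to organize the contradiction, which I would prefer, is to use the flag directly: a regular nilpotent $x\in\fb$ stabilizes a full flag $\mathcal F\colon V_1\subset\cdots\subset V_{n+1}$ and acts on it as a single Jordan block, so $xV_k\subset V_{k-1}$ for all $k$. The hypothesis ``$x_n$ nilpotent'' says that $x$ preserves $\C^n=\operatorname{span}\{e_1,\dots,e_n\}$ modulo... — more precisely, writing $p\colon\C^{n+1}\to\C^n$ for the projection killing $e_{n+1}$, $x_n = p\circ x\circ \iota$ where $\iota\colon\C^n\hookrightarrow\C^{n+1}$, and nilpotency of $x_n$ means $(p\circ x\circ\iota)^n=0$. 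I would then show that if the $K_{n+1}$-orbit of $\fb$ is not one of the $n+1$ closed ones, i.e. the flag $\mathcal F$ is $K_{n+1}$-conjugate to one of the ${\mathcal F}_{i,j}$ with $i<j$, then for the regular nilpotent $x$ stabilizing it the map $x_n$ has a nonzero eigenvalue. The main obstacle is precisely this last step: making the eigenvalue computation uniform over all the non-closed orbit representatives ${\mathcal F}_{i,j}$ (and over the $K_{n+1}$-translates, though by equivariance it suffices to treat the representatives). I expect to handle it by induction on $n$, peeling off the bottom of the flag: the condition on $x_n$ is itself of the same shape one dimension down, and one reduces to the base case where the mixed vector $e_i+e_{n+1}$ sits in $V_1$ or $V_n$, which is a direct $2\times 2$ or $3\times 3$ matrix computation. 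This inductive bookkeeping, rather than any deep idea, is where the real work lies; everything else is the structure theory already assembled in Section \ref{s:basicfacts}.
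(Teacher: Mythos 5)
Your overall strategy is the paper's: Proposition \ref{prop:closed} is deduced from the stronger statement (Theorem \ref{prop:nilpsink}) that a Borel subalgebra in a non-closed $K_{n+1}$-orbit contains no regular nilpotent element whose relevant projection is nilpotent, and your first sketch assembles the same reduction machinery (conjugation by $v$ to the pair $(\fb_{+},\theta^{\prime})$, the $\theta^{\prime}$-stable Levi factor $\fl$, and Lemma \ref{lem:qijopenlevi} to land in the open orbit of a smaller flag variety). But there are two genuine gaps. First, to ``track the condition $x_{n}$ nilpotent back through the conjugation by $v^{-1}$'' you must replace the corner projection $\pi_{n}(x)=x_{n}$ by the $\theta$-equivariant projection $\pi_{\fk_{n+1}}(x)$: the corner projection does not intertwine $\Ad(v^{-1})$, whereas $\pi_{\fk}=\Ad(v)\circ\pi_{\fk^{\prime}}\circ\Ad(v^{-1})$ does, and the same issue recurs when you project to the Levi factor. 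The paper bridges this with the small but essential observation that $\pi_{\fk_{n+1}}(x)=x_{n}\oplus x_{n+1\,n+1}$ and $x_{n+1\,n+1}=\Tr(x)-\Tr(x_{n})=0$ under hypothesis (1), so the two conditions coincide; your proposal never makes this identification, and without it the hypothesis cannot be transported through the reductions.

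Second, the decisive computation is never carried out. In your first sketch it is described only as ``produces a nonzero trace of some power $x_{n}^{k}$''; in your preferred flag-theoretic version you explicitly defer the inductive bookkeeping, and that induction is the step I do not believe goes through as stated: truncating the flag changes which vector plays the role of $e_{n+1}$, hence which symmetric subgroup acts, so the problem ``one dimension down'' is not of the same shape and the inductive hypothesis does not apply to it. The paper avoids any induction over the representatives ${\mathcal{F}}_{i,j}$: the projection to the Levi factor together with Lemma \ref{lem:qijopenlevi} reduces everything to the single open-orbit representative, where a cofactor expansion shows the determinant of the $n\times n$ upper-left corner of $\pi_{\fk_{n+1}}(x)$ equals $(-1)^{n}\prod_{i=1}^{n}a_{ii+1}\neq 0$ by Equation (\ref{eq:stdregnil}). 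You would need either to carry out that determinant computation after the paper's reduction, or to supply a correct replacement for the proposed induction.
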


Proposition \ref{prop:closed} follows from a stronger result.
\begin{thm}\label{prop:nilpsink}
Let $\fb\subset\fg$ be a Borel subalgebra with nilradical $\fn$
and suppose that the $K_{n+1}$-orbit $Q$ through $\fb$ is not closed.
  Then $\pi_{\fk_{n+1}}(\fn^{reg})\cap\mathcal{N}_{\fk_{n+1}}=\emptyset.$ 
\end{thm}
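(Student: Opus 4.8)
The plan is to reduce the statement to the structure of non-closed $K_{n+1}$-orbits $Q$, which are all of the form $Q_{i,j}$ with $i<j$, and then use the passage to the ``standard pair'' $(\fb_+,\theta')$ from Section \ref{s:basicfacts} to make the computation explicit. More precisely, fix a Borel $\fb\in Q_{i,j}$. By conjugating by $v$ as in Equation (\ref{eq:vdef}), the pair $(\fb,\theta)$ becomes $(\fb_+,\theta')$, and by Remark \ref{r:kprime} it suffices to show that $\pi_{\fk'}(\fn_+^{reg})\cap\mathcal{N}_{\fk'}=\emptyset$, where $\fk'=\fg^{\theta'}$ and $\pi_{\fk'}$ is the projection along the $(-1)$-eigenspace of $\theta'$. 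Here $\fn_+^{reg}$ is given explicitly by Equation (\ref{eq:stdregnil}). So the proof becomes a concrete linear-algebra statement about the involution $\theta'=\Ad(v^{-1}\theta(v))\circ\theta$, which by Equation (\ref{eq:vij}) is conjugation by $\tau_{i,j}t$ composed with $\theta=\Ad(c)$, i.e. conjugation by $\tau_{i,j}\,t\,c$ — a signed permutation matrix realizing the transposition $(i\,j)$ up to signs.

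The key computation is to identify $\pi_{\fk'}$ on the relevant coordinates and see why a regular nilpotent in $\fb_+$ can never project into the nilpotent cone of $\fk'$. The first step is to describe $\fk'$: since $\theta'$ is conjugation by a signed version of $\tau_{i,j}$, the fixed algebra $\fk'$ is a copy of $\fgl(n,\C)\oplus\fgl(1,\C)$ but with the ``distinguished line'' spanned (up to signs) by $e_i - e_j$ (or $e_i+e_j$) rather than by a coordinate vector. The second step is to compute, for a general $X=\sum_k a_k E_{k,k+1}+y\in\fn_+^{reg}$ with $a_1\cdots a_n\neq0$, the component $\pi_{\fk'}(X)$; the point is that the off-diagonal superdiagonal entries $a_k$ survive under $\pi_{\fk'}$ in a controlled way, and in particular $\pi_{\fk'}(X)$ still has enough of the superdiagonal to force a nonzero eigenvalue. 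Concretely, I expect that $\pi_{\fk'}(X)$, viewed as an endomorphism of $\C^{n+1}$, preserves the decomposition $\C^{n+1}=\C\{e_i\pm e_j\}\oplus(\text{complement})$ only partially, and that computing $\mathrm{Tr}(\pi_{\fk'}(X))$ or $\mathrm{Tr}(\pi_{\fk'}(X)^2)$, or exhibiting an explicit nonzero eigenvalue coming from the $a_k$'s near the indices $i,j$, gives a nonzero invariant — contradicting nilpotency. The natural invariant to try first is the restriction of $\pi_{\fk'}(X)$ to the $\fgl(1,\C)$-block: the $(i,j)$-type matrix entries $a_i,\dots,a_{j-1}$ feed into that block via the reflection, producing a nonzero scalar.

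The main obstacle I anticipate is bookkeeping: getting the exact form of $t$ in Equation (\ref{eq:vij}), hence the exact signs in $\theta'$, and then tracking which superdiagonal entries $a_k$ land in which block of $\fk'$ under $\pi_{\fk'}$. One has to be careful because the answer genuinely uses $i<j$ (for a closed orbit, $j=i$ in a degenerate sense and the obstruction disappears), so the argument must visibly break down exactly when the orbit is closed — this is a good sanity check on the computation. A cleaner route, which I would pursue in parallel, is to avoid $\theta'$ entirely: work directly with $\fb=\fb_{i,j}$ and $\theta=\Ad(c)$, write the regular nilpotent elements of $\fn_{i,j}$ using the flag ${\mathcal F}_{i,j}$, and compute $\pi_{\fk_{n+1}}$ of such an element along $\fp_{n+1}$; since $\fk_{n+1}=\fgl(n,\C)\oplus\fgl(1,\C)$ in the \emph{original} coordinates, $\pi_{\fk_{n+1}}(X)$ simply deletes the last row and column except for the corner entry, and one checks that the resulting $n\times n$ matrix has nonzero trace of some power because the flag ${\mathcal F}_{i,j}$ forces a vector $e_i+e_{n+1}$ into the picture, which under a regular nilpotent of $\fn_{i,j}$ produces a term that does not die upon deleting the last coordinate. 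Either way, the proof concludes by: (i) reducing to a standard representative of the non-closed orbit, (ii) writing $\fn^{reg}$ explicitly, (iii) computing $\pi_{\fk_{n+1}}$ in coordinates, and (iv) producing a nonzero Gelfand--Zeitlin-type invariant (a trace of a power) of the projection, contradicting membership in $\mathcal{N}_{\fk_{n+1}}$.
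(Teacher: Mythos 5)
Your setup (reduction to $\fb_{i,j}$ by equivariance, passage to the standard pair $(\fb_+,\theta^{\prime})$, and the explicit description of $\fn_+^{reg}$) matches the first step of the paper's argument, but the core of your proposal --- ``compute $\pi_{\fk^{\prime}}(X)$ and extract a nonzero invariant such as $\Tr(\pi_{\fk^{\prime}}(X))$, $\Tr(\pi_{\fk^{\prime}}(X)^2)$, or the $\fgl(1,\C)$-block'' --- is left as an expectation, and the specific invariants you name do not work. First, $\Tr(\pi_{\fk}(x))=0$ automatically since $\theta$ is conjugation and $x$ is nilpotent. Second, $\Tr(\pi_{\fk}(x)^2)$ can vanish: already for the open orbit of $\fgl(3,\C)$, the paper's representative gives
\begin{equation*}
\pi_{\fk_3}(x)=\left[\begin{array}{ccc} -a_{13} & a_{12} & 0\\ -a_{23} & 0 & 0\\ 0&0&a_{13}\end{array}\right],\qquad \Tr(\pi_{\fk_3}(x)^2)=2a_{13}^2-2a_{12}a_{23},
\end{equation*}
which is zero when $a_{13}^2=a_{12}a_{23}$ even though $a_{12}a_{23}\neq 0$ forces the $2\times 2$ block to be invertible. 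Third, the $\fgl(1,\C)$-block of $\pi_{\fk}(x)$ is $\pm a_{13}$ in this example (more generally it involves the coefficient $c_{ij}$ of the flag basis, which is unconstrained when $j>i+1$), so it can also vanish. The invariant that actually works is a \emph{determinant} of the $\fgl(n,\C)$-block, not a trace, and only after one has normalized to the right situation.

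The missing idea is the reduction to a $\theta^{\prime}$-stable Levi. For a general non-closed orbit $Q_{i,j}$ the matrix $\pi_{\fk}(x)$ has no uniform ``visible'' nonzero invariant (in some cases the obstruction sits in the corner entry, in others in a subdeterminant), and the paper resolves this by introducing the parabolic $\fq=\fl+\fu$ whose Levi $\fl$ contains the block $\fgl(j+1-i,\C)$ spanned by the indices $i,\dots,j$. Because $\fl$ and $\fu$ are $\theta^{\prime}$-stable, projection to $\fk^{\prime}$ commutes with projection to $\fl$; because projection of a nilpotent element of $\fq$ to $\fl$ stays nilpotent, it suffices to show that the $\fgl(j+1-i,\C)$-component of $x$ (which is regular nilpotent there) has non-nilpotent image under the induced projection; and Lemma \ref{lem:qijopenlevi} shows the induced orbit on $\B_{j+1-i}$ is the \emph{open} one. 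Only for the open orbit does the clean computation go through: there the upper-left $n\times n$ corner of $\pi_{\fk_{n+1}}(x)$ has determinant $(-1)^n\prod a_{kk+1}\neq 0$. Your ``cleaner route'' of computing directly with $\fb_{i,j}$ would have to rediscover this block structure by hand, and as written the proposal does not identify which invariant survives for which $(i,j)$; that is a genuine gap, not just bookkeeping.
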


We first prove Proposition \ref{prop:closed} assuming Theorem \ref{prop:nilpsink}.  

\begin{proof} [Proof of Proposition \ref{prop:closed}]
Suppose $x\in\fg$ satisfies (1) in (\ref{con:theconditions}).  We observe that $\pi_{\fk_{n+1}}(x)=x_{n}$.  Indeed, $\pi_{\fk_{n+1}}(x)$ is the block diagonal matrix $x_{n}\oplus x_{n+1,n+1}$.  But since $x$ satisfies (1) in (\ref{con:theconditions}), we must have:
\begin{equation*}
\begin{split}
0 &=Tr(x)\\
&=Tr(x_{n})+x_{n+1,n+1}\\
&=x_{n+1,n+1},\end{split}
\end{equation*}
where $Tr(x)$ denotes the trace of $x\in\fg$.  Now the proposition follows immediately from Theorem \ref{prop:nilpsink}.
  \end{proof}

\begin{proof}[Proof of Theorem \ref{prop:nilpsink}] 
Let $Q$ be a $K_{n+1}$-orbit in $\B_{n+1}$ which is not closed. By
the results of \cite{Yam} discussed in Section \ref{s:basicfacts}, we
may assume $Q=K_{n+1}\cdot \fb_{i,j}$. By $K_{n+1}$-equivariance of
$\pi_{\fk_{n+1}}$, it suffices to prove the assertion for $\fb=\fb_{i,j}$.
Recall the element $v$ from Equation (\ref{eq:vdef}) such that
$\Ad(v)\fb_{+} = \fb$, and the involution $\theta^{\prime}$, and
let $\pi_{\fk^{\prime}}$ denote projection from $\fg$ to $\fk^{\prime}$
with respect to the $-1$-eigenspace of $\theta^{\prime}$.
By Equation (\ref{eq:thetaprimedef}), it follows that
$\pi_{\fk} = \Ad(v) \circ \pi_{\fk^{\prime}} \circ \Ad(v^{-1})$.
Hence, for $y$ regular in $\fn$, $\pi_{\fk}(y)$ is nilpotent
if and only if $\pi_{\fk^{\prime}}(x)$ is nilpotent, where
$x = \Ad(v^{-1})(y)$ in $\fn_{+}$ is regular nilpotent.

Recall the Levi subalgebra $\fl$ from Equation (\ref{eq:levifactor}),
as well as the parabolic $\fq = \fl + \fu$ and $\fgl(j+1-i,\C)$.
Then $x\in \fq$, so we decompose $x =x_{\fl}+x_{\fu}$, with $x_{\fl}\in\fl$
 and $x_{\fu}\in\fu$. Note that $x_{\fl}$ is regular nilpotent in
  $\fgl(j+1-i,\C)$
by Equation (\ref{eq:stdregnil}). 
Let $\pi_{\fl}:\fq\to \fl$ be the projection of $\fq$ onto $\fl$ along $\fu$.
Since $\fl$ and $\fu$ are $\theta^{\prime}$-stable 
by Remark \ref{r:thetastablefactors}, 
it follows that $\theta^{\prime} \circ \pi_{\fl} = 
\pi_{\fl} \circ \theta^{\prime}$.  Recalling that $\tilde{\theta}$ is the restriction of $\theta^{\prime}$ to $\fgl(j+1-i,\C)$, let $\pi_{\tilde{\fk}}$ be the projection from $\fgl(j+1-i,\C)$ to the fixed point set of $\tilde{\theta}$ along its $-1$-eigenspace.  It follows that 
$$
\pi_{\fl}\circ\pi_{\fk^{\prime}}(x)=\frac{1}{2}(\theta^{\prime}(x_{\fl})+x_{\fl})=\frac{1}{2}(\tilde{\theta}(x_{\fl})+x_{\fl})=\pi_{\tilde{\fk}}\circ\pi_{\fl}(x). 
$$
As is well-known, if an element $y\in\fq$ is nilpotent, then $\pi_{\fl}(y)$ is also nilpotent.  Thus, it suffices to show that $\pi_{\tilde{\fk}}(x_{\fl})$ is not nilpotent.
  
Let $\pi_{\fk_{j+1-i}}$ be the projection
from $\fgl(j+1-i,\C)$ to $\fk_{j+1-i}$ along its $-1$-eigenspace.  By 1) of Lemma \ref{lem:qijopenlevi}, it follows that $\Ad(\tilde{v}) \circ
\pi_{\tilde{\fk}} = \pi_{{\fk}_{j+1-i}} \circ \Ad(\tilde{v})$, and
hence it suffices to prove that $\pi_{\fk_{j+1-i}}(\Ad(\tilde{v})(x_{\fl}))$
is not nilpotent. By 2) of Lemma \ref{lem:qijopenlevi} and equivariance,
it suffices to prove this last assertion in the case where $\fb$ is
the stablizer of the flag
 $$
 e_{1}+e_{n+1}\subset e_{2}\subset\dots \subset e_{n}\subset e_{n+1}.  
 $$
in the open $K_{n+1}$-orbit for the flag variety of $\fgl(n+1,\C)$.

Let $g\in GL(n+1,\C)$ be the matrix
$$
g=\left[\begin{array}{cccc} 1 & & & 0\\
 & \ddots&  & \vdots\\
 & & 1& 0\\
 1& & 0 & 1\end{array}\right]_{\mbox{,}}
 $$
 so that
 \begin{equation}\label{eq:gcong}
 \fb=g\fb_{+}g^{-1}.
\end{equation}
  Let $x\in\fb$ be regular nilpotent.  
Then by Equation (\ref{eq:gcong}), the element $x$ has the form
$$
x=\left[\begin{array}{ccccc}
-a_{1n+1} & a_{12} & a_{13} &\dots & a_{1n+1}\\
-a_{2n+1} & 0 & \ddots& &\\
\vdots &\vdots & \ddots &  a_{n-1n}& \vdots\\
-a_{nn+1}&0& \dots &0 &a_{nn+1}\\
-a_{1n+1}& a_{12} & \dots & \dots & a_{1n+1}\end{array}\right]_{\mbox{,}}
$$
where $a_{ii+1}\neq 0, \, 1\leq i\leq n$.  Then we compute 
$$\pi_{\fkn}(x)=\left[\begin{array}{ccccc}
-a_{1n+1} & a_{12} & a_{13} &\dots &0\\
-a_{2n+1} & 0 & \ddots& &\\
\vdots &\vdots & \ddots & a_{n-1n} & \vdots\\
-a_{nn+1}&0& \dots &0 &0\\
0& 0 & \dots & 0& a_{1n+1}\end{array}\right]
$$
By expanding by cofactors along the $n$th row, the determinant
of the $n\times n$ submatrix of $\pi_{\fkn}(x)$ in the upper left corner is 
$(-1)^n \prod_{i=1}^{n}a_{ii+1}\neq 0$,
which is nonzero by Equation (\ref{eq:stdregnil}).  Hence,
$\pi_{\fkn}(x)$ is not nilpotent, which proves the theorem.
\end{proof}


 \subsection{Analysis of the second condition in (\ref{con:theconditions})}\label{s:cond2}
 We now study the closed $K_{n+1}$-orbits on $\B_{n+1}$ and the condition (2) in (\ref{con:theconditions}). 
 Let $x\in\fg$ be nilpotent and satisfy (2) in (\ref{con:theconditions}) for 
$i=n+1$.  Suppose further that $x\in\fb$, where $\fb$ generates a closed $K_{n+1}$-orbit in $\B_{n+1}$.  In this section, we show that $\fb$ must generate the same orbit in $\B_{n+1}$ as the Borel subalgebra of upper triangular matrices or the Borel subalgebra of lower triangular matrices.  This is an easy consequence of the following proposition.  Recall that $\pi_{n}:\fg\to\fg_{n}$ denotes the projection which sends an $(n+1)\times (n+1)$ matrix to its $n\times n$ submatrix in the upper left corner. 

\begin{prop}\label{prop:centralizers}
Let $\fb\subset\fg$ be a Borel subalgebra
 that generates a closed $K_{n+1}$-orbit $Q$, 
which is neither the orbit of the upper nor the lower triangular matrices.
  Let $\fn=[\fb,\fb]$ and let $\fn_{n}:=\pi_{n}(\fn)$.
  Let $\fz_{\fg}(\fn)$ denote the centralizer of $\fn$ in $\fg$ and 
let $\fz_{\fg_{n}}(\fn_{n})$ denote the centralizer of $\fn_{n}$ in $\fg_{n}$.  Then 
\begin{equation}\label{eq:nilcentralizer}
\fz_{\fg_{n}}(\fn_{n})\cap \fz_{\fg}(\fn)\neq 0.
\end{equation}  
\end{prop}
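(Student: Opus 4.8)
The plan is to reduce to a completely explicit computation by using Yamamoto's classification (Section \ref{s:basicfacts}): the closed $K_{n+1}$-orbits are exactly $Q_1,\dots,Q_{n+1}$, where $Q_i = K_{n+1}\cdot\fb_i$ and $\fb_i$ stabilizes the flag $e_1\subset\cdots\subset\underbrace{e_{n+1}}_{i}\subset e_i\subset\cdots\subset e_n$. The hypothesis that $Q$ is closed but is neither the upper- nor lower-triangular orbit means $Q = Q_i$ for some $i$ with $2\le i\le n$, and by $K_{n+1}$-equivariance of both $\pi_n$ and the formation of centralizers it suffices to verify \eqref{eq:nilcentralizer} for the single representative $\fb=\fb_i$. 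First I would write down $\fb_i$ and its nilradical $\fn$ explicitly: conjugating $\fb_+$ by the permutation $w$ corresponding to the cycle $(n+1\ n\ \cdots\ i)$ carries the standard flag to the flag above, so $\fn = \Ad(\dot w)(\fn_+)$, and $\fz_\fg(\fn) = \Ad(\dot w)(\fz_\fg(\fn_+)) = \Ad(\dot w)(\C\, e_{n+1} + \cdots)$ is the one-dimensional space of scalar multiples of the regular nilpotent determined by the flag. Concretely, $\fz_\fg(\fn)$ is spanned by a single nilpotent matrix $N$ supported on the superdiagonal of the reordered flag.

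Next I would compute $\fn_n = \pi_n(\fn)$ and its centralizer in $\fg_n$. The key structural point is that for $2\le i\le n$ the flag defining $\fb_i$ has $e_{n+1}$ inserted in position $i$, so when one truncates to the upper-left $n\times n$ block the vector $e_{n+1}$ is "killed" and the resulting $\fn_n$ is the nilradical of a Borel of $\fg_n$ that is not regular-nilpotent-generating in the naive sense — more precisely, $\fn_n$ fixes a nonzero vector coming from the $e_{n+1}$-slot, so its centralizer $\fz_{\fg_n}(\fn_n)$ is strictly larger than one-dimensional, and in fact contains nilpotent matrices built from $E_{k,n}$ and $E_{k,1}$-type terms that also annihilate $\fn$. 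I would then exhibit an explicit nonzero matrix $Z$ lying in $\fz_{\fg_n}(\fn_n)\cap\fz_\fg(\fn)$: a natural candidate is $Z = E_{k,l}$ (or a short sum of such) where the row index $k$ and column index $l$ are chosen so that $l$ indexes the "last" vector of the $\fb_i$-flag restricted to the first $n$ coordinates and $k$ indexes the position that the truncation has decoupled. The verification that such a $Z$ commutes with all of $\fn$ and with all of $\fn_n$ is a direct matrix multiplication using the superdiagonal description of $\fn$ coming from $\Ad(\dot w)(\fn_+)$.

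I expect the main obstacle to be bookkeeping: keeping careful track of how the cycle $(n+1\ n\ \cdots\ i)$ permutes basis vectors, hence where the nonzero entries of $\fn$ and of $\fn_n$ sit, and then pinning down exactly which matrix unit (or small combination) lands in the intersection for the general index $i$ in the range $2\le i\le n$. A clean way to organize this, and the approach I would actually adopt in the writeup, is to first treat the model case $i=n$ (where $\fb_n$ stabilizes $e_1\subset\cdots\subset e_{n-1}\subset e_{n+1}\subset e_n$, so $\fn$ and $\fn_n$ differ only in the last two slots), produce the explicit generator of the intersection there, and then observe that the general case reduces to this one because inserting $e_{n+1}$ higher up in the flag only affects coordinates $\le n$ in a way that still leaves the relevant matrix unit centralizing both $\fn$ and $\fn_n$. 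The surjectivity-type input "a nilpotent element of $\fq$ projects to a nilpotent element of the Levi" is not needed here; what is needed is only the explicit description \eqref{eq:stdregnil} of $\fn_+^{reg}$ together with the standard fact $\fz_\fg(\fn_+) = Z(\fg) \oplus (\text{line through }e_+)$ for the regular nilpotent $e_+$, transported by $\Ad(\dot w)$.
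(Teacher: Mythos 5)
There is a genuine gap in your argument: the identification of $\fz_{\fg}(\fn)$ is wrong. For the standard Borel, the centralizer of the \emph{entire nilradical} $\fn_{+}$ in $\fgl(n+1,\C)$ is $\C I\oplus \C E_{1\,n+1}$, the center plus the highest root space. It is not $Z(\fg)$ plus the line through the regular nilpotent $e_{+}=\sum_{k}E_{k\,k+1}$: indeed $e_{+}$ does not even centralize $\fn_{+}$, since $[e_{+},E_{12}]=-E_{13}\neq 0$ for $n\geq 2$ (you are conflating $\fz_{\fg}(\fn_{+})$ with a piece of $\fz_{\fg}(e_{+})=\lspan\{I,e_{+},\dots,e_{+}^{n}\}$). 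Consequently the "single nilpotent matrix $N$ supported on the superdiagonal of the reordered flag" that you propose to transport by $\Ad(\dot{w})$ does not lie in $\fz_{\fg}(\fn)$, and your candidate for an element of the intersection is misidentified. The surrounding discussion of $\fz_{\fg_{n}}(\fn_{n})$ is also off track: $\pi_{n}(\fb_{i})$ is the upper triangular Borel of $\fg_{n}$ and $\fn_{n}=\pi_{n}(\fn)$ is exactly its nilradical, so $\fz_{\fg_{n}}(\fn_{n})=\C I_{n}\oplus\C E_{1n}$; there are no extra "$E_{k,1}$-type" elements.

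Once this is corrected, your plan does go through and in fact converges to the paper's proof. If $\dot{w}$ carries the standard flag to $e_{1}\subset\cdots\subset e_{i-1}\subset e_{n+1}\subset e_{i}\subset\cdots\subset e_{n}$, then $\dot{w}$ fixes $e_{1}$ (because $i>1$) and sends $e_{n+1}$ to $e_{n}$ (because $i<n+1$), so $\fz_{\fg}(\fn)=\C I\oplus\C\,\Ad(\dot{w})E_{1\,n+1}=\C I\oplus\C E_{1n}$, and $E_{1n}$ lies in $\fg_{n}$ and visibly centralizes $\fn_{n}$. This is precisely the root vector the paper exhibits: it takes $\beta=\alpha_{1}+\cdots+\alpha_{n-1}$ and checks directly from the root system of $\fb_{i}$ that $\beta+\alpha$ is never a root for $\alpha$ positive, so $\fg_{\beta}\subset\fz_{\fg}(\fn)\cap\fg_{n}$. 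Note that this computation is exactly where the hypothesis $1<i<n+1$ is used (for the upper, resp.\ lower, triangular orbit the highest root vector is $E_{1\,n+1}$, resp.\ $E_{n+1\,n}$, which is not in $\fg_{n}$); your writeup should make that dependence explicit rather than leaving the choice of matrix unit "to be pinned down."
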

\begin{proof}

By the $K_{n+1}$-equivariance of the projection $\pi_{n}$, we can assume that $\fb$ is the stabilizer of the flag in $\C^{n+1}$:
$$
e_{1}\subset \dots\subset \underbrace{e_{n+1}}_{i}\subset e_{i}\subset \dots\subset e_{n},
$$
where $1< i< n+1$ (see Section \ref{s:basicfacts}).  The Borel subalgebra $\fb$ is the set of all matrices of the form
\begin{equation}\label{eq:bigmatrix}
\fb=\left[\begin{array}{ccccccccc}
a_{11} &\dots &\dots &a_{1i-1} &\dots &\dots & & a_{1n} & a_{1n+1}\\
0 & \ddots &  & \vdots & & & & \vdots & \vdots\\
\vdots& & & a_{i-1i-1} & &*&  &a_{i-1n} & a_{i-1n+1}\\
& & &0 & a_{ii} & & & \vdots &0\\
\vdots&  & &\vdots & 0 &\ddots & & & \vdots\\
& & & \vdots&\vdots & &\ddots &\vdots & \vdots\\
\vdots & & & 0&0 &\dots&  & a_{nn} & 0\\
0&\dots & \dots & 0& a_{n+1i}& \dots& & a_{n+1n}& a_{n+1n+1}\end{array}\right]_{\mbox{,}}
\end{equation}
$1<i<n+1$.  A system of positive roots for this Borel with respect to the Cartan subalgebra of diagonal matrices is $\Phi=\Phi_{+, n}\cup \Gamma$, where $\Phi_{+,n}$ is the set of positive roots for the strictly upper triangular matrices in $\fgl(n,\C)$ and $\Gamma$ is the following set of roots: 
\begin{equation}\label{eq:roots}
\Gamma=\{-\alpha_{i}-\dots-\alpha_{n}, -\alpha_{i+1}-\dots-\alpha_{n},\dots, -\alpha_{n}\}\cup \{\alpha_{1}+\dots+\alpha_{n}, \alpha_{2}+\dots + \alpha_{n}, \dots, \alpha_{i-1}+\cdots+ \alpha_{n}\}.  
\end{equation} 
Let $\beta=\alpha_{1}+\dots +\alpha_{n-1}$.  Then $\beta\in\Phi$ and $\beta+\alpha\notin\Phi$ for any $\alpha\in \Phi$.  Thus, if $\fg_{\beta}$ is the root space corresponding to $\beta$, $\fg_{\beta}\subset\fz_{\fg}(\fn)$.  Note also that $\fg_{\beta}\subset\fg_{n}$ so that $\fg_{\beta}\subset\fz_{\fg_{n}}(\fn_{n})\cap\fz_{\fg}(\fn)$. 

\end{proof}


\begin{rem}
 It follows from the proof of Proposition \ref{prop:centralizers} that if $x\in\fb$ is nilpotent with $\fb\in Q$, $Q$ a closed $K_{n+1}$-orbit, then $x_{n}\in\fg_{n}$ is nilpotent (in contrast to the situation of
 Theorem \ref{prop:nilpsink}.)   
 \end{rem}

 Let $Q_{+,n+1}=K_{n+1}\cdot \fb_{+}$ denote the $K_{n+1}$-orbit of the Borel subalgebra of upper triangular matrices in $\mathcal{B}_{n+1}$ and let $Q_{-,n+1}=K_{n+1}\cdot \fb_{-}$ denote the $K_{n+1}$-orbit of the Borel subalgebra of the lower triangular matrices in $\mathcal{B}_{n+1}$.

The following result is immediate from
 Propositions \ref{prop:closed} and \ref{prop:centralizers}.

\begin{prop}\label{prop:bothconditions}
Let $x\in\fg$ satisfy both conditions in (\ref{con:theconditions}) and let $\fb$ be the unique Borel subalgebra containing $x$.  Then $\fb\in Q_{+,n+1}$ or $\fb\in Q_{-,n+1}$.  
\end{prop}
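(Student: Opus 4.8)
The plan is to combine Propositions \ref{prop:closed} and \ref{prop:centralizers} with the elementary observation that a nilpotent element of a Borel subalgebra lies in its nilradical. First I would apply Proposition \ref{prop:closed}: since $x$ satisfies condition (1) of (\ref{con:theconditions}) it is regular nilpotent, hence contained in a unique Borel $\fb$, and Proposition \ref{prop:closed} tells us that the $K_{n+1}$-orbit $Q$ through $\fb$ is closed. It then remains only to exclude the closed orbits other than $Q_{+,n+1}$ and $Q_{-,n+1}$; in the notation of Section \ref{s:basicfacts} these are the orbits $Q_{i}=K_{n+1}\cdot \fb_{i}$ with $1<i<n+1$, since $Q_{n+1}=Q_{+,n+1}$ and $Q_{1}=Q_{-,n+1}$.

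So I would argue by contradiction, supposing $Q\neq Q_{\pm,n+1}$. Put $\fn=[\fb,\fb]$ and $\fn_{n}=\pi_{n}(\fn)$. The key point is that $x\in\fn$: in a basis adapted to the flag stabilized by $\fb$ the matrix $x$ is upper triangular and nilpotent, hence strictly upper triangular, so $x\in\fn$, and therefore $x_{n}=\pi_{n}(x)\in\fn_{n}$. It follows that every element of $\fg$ centralizing $\fn$ centralizes $x$, and every element of $\fg_{n}$ centralizing $\fn_{n}$ centralizes $x_{n}$, whence
\[
\fz_{\fg_{n}}(\fn_{n})\cap\fz_{\fg}(\fn)\ \subseteq\ \fz_{\fg_{n}}(x_{n})\cap\fz_{\fg}(x).
\]
By Proposition \ref{prop:centralizers} the left-hand side is nonzero --- and since centralizers transform equivariantly under $K_{n+1}$, that proposition applies to our $\fb$ directly, with no need to pass to a standard representative of $Q$ --- so the right-hand side is nonzero, contradicting condition (2) of (\ref{con:theconditions}). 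Hence $\fb\in Q_{+,n+1}$ or $\fb\in Q_{-,n+1}$.

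I do not expect a genuine obstacle, since the substance lies in the two cited propositions; the only point requiring care is the bookkeeping of the centralizer inclusions through $\pi_{n}$, which rests on $x$ truly lying in the nilradical $\fn$ rather than merely in $\fb$ --- and that is exactly where the nilpotence of $x$ enters.
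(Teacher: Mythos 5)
Your proposal is correct and follows the paper's own route: the paper simply declares the proposition ``immediate from Propositions \ref{prop:closed} and \ref{prop:centralizers},'' and your argument is precisely the spelling-out of that deduction (closedness of $Q$ from condition (1), then exclusion of the remaining closed orbits via the nonvanishing of $\fz_{\fg_{n}}(\fn_{n})\cap\fz_{\fg}(\fn)$ and the inclusion into $\fz_{\fg_{n}}(x_{n})\cap\fz_{\fg}(x)$ coming from $x\in\fn$). The details you supply --- that a nilpotent element of a Borel lies in its nilradical, and that Proposition \ref{prop:centralizers} applies to any Borel in such an orbit --- are exactly the right ones and are handled correctly.
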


We conclude this section with a partial
 converse to Proposition \ref{prop:centralizers}
 that will be useful in proving Theorem \ref{thm:mainthm}.  

\begin{prop}\label{prop:converse}
Let $x\in\fg$ be a regular nilpotent element and 
suppose $x\in\fb\subset Q$, with $Q=Q_{+,n+1}$ or $Q_{-,n+1}$.  Then $x_{n}\in\fg_{n}$ is regular nilpotent and $\fz_{\fg_{n}}(x_{n})\cap \fz_{\fg}(x)=0$.  
\end{prop}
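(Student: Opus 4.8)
The plan is to reduce the statement to an explicit computation in the two standard Borel subalgebras $\fb_+$ and $\fb_-$, using $K_{n+1}$-equivariance of the projection $\pi_n$ and the fact that regular nilpotents are conjugate. Since $Q = Q_{+,n+1}$ or $Q_{-,n+1}$, and the conclusion (``$x_n$ is regular nilpotent and $\fz_{\fg_n}(x_n)\cap\fz_{\fg}(x)=0$'') is invariant under the $K_{n+1}$-action on the pair $(x,\fb)$, I may assume $\fb = \fb_+$ (the case $\fb=\fb_-$ is entirely analogous, conjugating by the longest Weyl group element, or by the same computation with the roles of upper/lower triangular reversed). So fix $\fb=\fb_+$ and let $x\in\fn_+^{reg}$, described by Equation (\ref{eq:stdregnil}): $x = \sum_{i=1}^n a_i E_{ii+1} + y$ with all $a_i\neq 0$ and $y\in[\fn_+,\fn_+]$.

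First I would check that $x_n\in\fg_n$ is regular nilpotent. We have $x_n = \pi_n(x)$, which is obtained from $x$ by deleting the last row and column; since deleting the last row and column kills the last column entirely and the entries $a_n E_{nn+1}$, $y$ beyond, we get $x_n = \sum_{i=1}^{n-1} a_i E_{ii+1} + y'$ with $y'\in[\fn_{+,n},\fn_{+,n}]$ and $a_1\cdots a_{n-1}\neq 0$. By Equation (\ref{eq:stdregnil}) applied in $\fg_n$, this is a regular nilpotent element of $\fg_n$. In particular it lies in $\fb_{+,n}$, the upper triangular Borel of $\fg_n$, and its centralizer $\fz_{\fg_n}(x_n)$ is an $n$-dimensional abelian subalgebra contained in $\fb_{+,n}$, spanned by $\{1, x_n, x_n^2, \dots, x_n^{n-1}\}$ (the polynomials in $x_n$).

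Next, the heart of the argument: I need $\fz_{\fg_n}(x_n)\cap\fz_{\fg}(x)=0$. An element $z\in\fz_{\fg_n}(x_n)$ is an $n\times n$ matrix (extended by zeros in the last row and column) which is a polynomial $p(x_n)$ in $x_n$, hence upper triangular with constant main diagonal $p(0)\cdot I_n$. Suppose further $z\in\fz_{\fg}(x)$, i.e. $[z,x]=0$ in $\fg$. Writing $x = x_n + x_{\mathrm{col}}$ where $x_{\mathrm{col}}$ collects the entries of $x$ in the last column (rows $1,\dots,n$) — note $x$ has zero last row since the last row of an element of $\fn_+$ is zero — the bracket $[z,x] = [z,x_n] + [z,x_{\mathrm{col}}] = [z,x_{\mathrm{col}}]$, since $z$ and $x_n$ commute. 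So the condition becomes $z x_{\mathrm{col}} = x_{\mathrm{col}}\, z$ as $(n+1)\times(n+1)$ matrices; because $x_{\mathrm{col}}$ is supported in the last column and $z$ is supported in the upper-left $n\times n$ block, $x_{\mathrm{col}} z = 0$ automatically, so the condition is simply $z\cdot x_{\mathrm{col}}=0$, i.e. $z$ annihilates the last column of $x$ viewed as a vector in $\C^n$ (the first $n$ entries, which is $a_n e_n$ plus entries of $y$, hence has a nonzero $e_n$-component and so is a nonzero vector whose ``top'' support is controlled). Now $z = p(x_n)$ acts on $\C^n$; since $x_n$ is regular nilpotent on $\C^n$ (a single Jordan block after change of basis), $\C^n$ is a cyclic $\C[x_n]$-module, and the last-column vector $w := x_{\mathrm{col}}$ generates... here I'd instead argue directly: $z = p(x_n)$ with $p(0)=c$; if $c\neq 0$ then $z$ is invertible on $\C^n$ (upper triangular with nonzero diagonal $c$), contradicting $zw=0$ with $w\neq 0$; so $c=0$, meaning $z\in\mathrm{rad}$, i.e. $z\in\fn_{+,n}\cap\fz_{\fg_n}(x_n)$. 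I then need to push this down: the vector $w$ has its lowest nonzero coordinate in position $n$ (coefficient $a_n\neq 0$), and $z$ nilpotent upper triangular kills $w$ only if $z=0$, because $z w = 0$ forces, reading coordinates from the bottom up and using that $x_n$ is a regular nilpotent (so $\fz_{\fg_n}(x_n)=\C[x_n]$ and the action of any nonzero element of $\fn_{+,n}\cap\C[x_n]$ on a vector with nonzero bottom coordinate is nonzero), that the polynomial $p$ vanishes identically.

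The step I expect to be the main obstacle is this last coordinate computation: verifying cleanly that a nonzero element of $\fn_{+,n}\cap\fz_{\fg_n}(x_n)$ cannot annihilate the vector $w=x_{\mathrm{col}}$. The cleanest route is probably to exploit that $x$ itself is regular nilpotent in $\fg$, hence $\C^{n+1}$ is a cyclic $\C[x]$-module with cyclic vector $e_{n+1}$, and to relate $w = x e_{n+1}$ (up to the diagonal term) to this cyclicity: if $p(x_n)w = 0$ with $p(x_n)\in\fg_n\subset\fg$ commuting with $x$, then $p(x_n)$ kills $x\cdot e_{n+1}$ and (being in $\fg_n$, it already kills $e_{n+1}$ since $e_{n+1}\notin\C^n$) hence kills $\C[x]\cdot e_{n+1} = \C^{n+1}$, so $p(x_n)=0$ as an operator on $\C^{n+1}$, whence $p(x_n)=0$. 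Alternatively one invokes Theorem \ref{d:sreg}(2) backwards is circular, so I will give the direct cyclicity argument. This completes the proof, and the $\fb_-$ case follows by the symmetric computation.
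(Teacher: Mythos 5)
Your proposal is correct, and its skeleton matches the paper's: reduce to $\fb=\fb_+$ by $K_{n+1}$-equivariance, read off from Equation (\ref{eq:stdregnil}) that $x_n$ is regular nilpotent, and then reduce everything to the claim $\fz_{\fg}(x)\cap\fg_n=0$. Where you diverge is in how that claim is proved. The paper writes a general element of $\fz_{\fg}(x)=\mathrm{span}\{I,\Ad(b)e,\dots,\Ad(b)e^n\}$ and observes that each $\Ad(b)e^i$ contributes a nonzero entry $c_iE_{n-i+1\,n+1}$ in the last column while the remaining terms $X_i$ sit in strictly higher rows, so requiring the last column to vanish kills the coefficients one at a time. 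You instead argue that any $z\in\fz_{\fg}(x)\cap\fg_n$ annihilates $e_{n+1}$ and commutes with $x$, hence annihilates $\C[x]\cdot e_{n+1}=\C^{n+1}$, so $z=0$. This cyclic-vector argument is clean and, once you reach it, makes the intermediate material in your write-up (the decomposition $x=x_n+x_{\mathrm{col}}$, the reduction to $zw=0$, and the discussion of $p(0)$) unnecessary; it could all be deleted. The one point you should make explicit rather than assert is that $e_{n+1}$ is actually a cyclic vector for $x$ (not merely that $\C^{n+1}$ is a cyclic $\C[x]$-module): this follows from Equation (\ref{eq:stdregnil}), since $x^k e_{n+1}$ has $e_{n+1-k}$-coefficient $a_n a_{n-1}\cdots a_{n-k+1}\neq 0$ and is supported in coordinates $\le n+1-k$, so the iterates form a triangular basis. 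That verification is essentially the same triangularity computation the paper performs on the centralizer basis, so the two proofs cost about the same; yours is arguably the more conceptual packaging.
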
  
\begin{proof}
By $K_{n+1}$-equivariance of the projection $\pin:\fg\to\fg_{n}$, we may assume that $\fb=\fb_{+}$ or $\fb_{-}$.  Without loss of generality, we assume $\fb=\fb_{+}$,  since the proof in the case of $\fb=\fb_{-}$ is completely analogous.  Let $\fn=[\fb,\fb]$ and let $\fn^{reg}$ be the regular nilpotent elements of $\fb$.    Then if $x\in\fn^{reg}$, then by Equation
(\ref{eq:stdregnil}),
$x=\lambda_{1} E_{12}+\lambda_{2} E_{23}+\dots+ \lambda_{n} E_{nn+1}+ z$, where $\lambda_{i}\in\C^{\times}$ and $z\in[\fn,\fn]$.  It follows easily that
$x_{n}\in\pi_{n}(\fb)$ is regular nilpotent.

Now we claim that for any $x\in\fn^{reg}$
\begin{equation}\label{eq:centralizers}
\fz_{\fg}(x)\cap\fg_{n}=0.
\end{equation}
It is easily seen that $\fz_{\fg}(x)\cap\fg_{n}=\fz_{\fg_{n}}(x_{n})\cap \fz_{\fg}(x)$. 
Let $x\in\fn^{reg}$, then $x=\Ad(b) e $ for $b\in B$ 
and one knows that $\fz_{\fg}(x)$ has basis 
$\{I, \Ad(b) e, \dots, \Ad(b) e^{n}\}$, 
where $I$ is the $(n+1)\times (n+1)$ identity matrix.  
In matrix coordinates,  
\begin{equation}\label{eq:basis}
\begin{split}
Id&=E_{11}+\cdots+E_{n+1n+1}\\
\Ad(b) e&= X_{1}+c_{1}E_{nn+1}\\
\vdots &\\
\Ad(b) e^{i}&= X_{i}+c_{i}E_{n-i+1 n+1}\\
\vdots & \\
\Ad(b)  e^{n}&=c_{n} E_{1n+1},
\end{split} 
\end{equation}
where $c_{i}\in\C^{\times}$ for $1\leq i\leq n$ and $X_{i}\in\fn$ is of the form $X_{i}=\sum_{j<k, j<n-i+1} \alpha^{i}_{jk} E_{jk}$ with $\alpha^{i}_{jk}\in\C$ for $i\leq n-1$.
Suppose that $y\in\fz_{\fg}(x)\cap \fg_{n}$.  Then there exist scalars $d_{0},d_{1},\dots, d_{n}\in\C$ such that $d_{0}Id+d_{1}\Ad(b) e+\dots + d_{n}\Ad(b) e^{n}\in\fg_{n}$.  But this implies that $d_{0}=0$, by the form of the $X_{i}$ for $1\leq i\leq n-1$ in (\ref{eq:basis}).  Similarly, $d_{1}=0$ by the form of the $X_{i}$ for $i\geq 2$.  Thus, $d_{i}=0$ for all $i$ by induction.  
\end{proof}



  \section{Components of the strongly regular nilfibre}\label{s:components}

Let $1\leq i\leq n+1$.  If $y\in\fg_{i}$ satisfies the conditions in (\ref{con:theconditions}) and $\fb \subset\fg_{i}$ is the unique Borel containing $y$, then Proposition \ref{prop:bothconditions} implies that $\fb\in Q_{+,i}$ or $Q_{-,i}$, where $Q_{+,i}$ is the $K_{i}$-orbit of the upper triangular matrices in $\mathcal{B}_{i}$ and $Q_{-,i}$ is the $K_{i}$-orbit of the lower triangular matrices.  In this section, we develop a construction which links together the $K_{i}$-orbits $Q_{+,i}$ and $Q_{-,i}$ for each $i$, $1\leq i\leq n+1$ to construct Borel subalgebras of $\fg$ that contain elements of $\Phi^{-1}(0)_{sreg}$ and use them to find the irreducible components of $\Phi^{-1}(0)_{sreg}$.  The main tool in our construction is the following simple proposition. 


\begin{prop}\label{prop:borels}
Let $Q$ be a closed $K_{n+1}$-orbit in $\B_{n+1}$ and $\fb\in Q$.  Then $\pi_{n}(\fb)\subset\fg_{n}$ is a Borel subalgebra.  Moreover, $\pi_{n}(\fb)$ is a subalgebra of $\fb$.
\end{prop}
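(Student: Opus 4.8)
The plan is to reduce to the explicit representatives of closed $K_{n+1}$-orbits given by Yamamoto in Section \ref{s:basicfacts} and then check the claim directly on flags. By $K_{n+1}$-equivariance of $\pi_n$ (note that $K_{n+1}$ normalizes $\fg_n$, so $\pi_n \circ \Ad(k) = \Ad(k) \circ \pi_n$ for $k \in K_{n+1}$, and $\Ad(k)$ permutes Borel subalgebras of $\fg_n$), it suffices to verify the statement for $\fb = \fb_k$, the stabilizer of the flag $e_1 \subset \dots \subset \underbrace{e_{n+1}}_{k} \subset e_k \subset \dots \subset e_n$ in $\C^{n+1}$, for each $k$ with $1 \le k \le n+1$.

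First I would compute $\pi_n(\fb_k)$ explicitly. Writing out $\fb_k$ as the set of matrices preserving the given flag, one sees that $\fb_k$ consists of block-type matrices whose upper-left $n \times n$ corner, as $x$ ranges over $\fb_k$, is exactly the space of $n \times n$ matrices stabilizing the flag $e_1 \subset \dots \subset e_{k-1} \subset e_k \subset \dots \subset e_n$ in $\C^n$ (i.e., the ordinary complete flag, possibly reindexed), hence a Borel subalgebra of $\fg_n$ — when $k = n+1$ this is the upper triangular Borel, when $k \le n$ it is the Borel attached to the reordered standard flag. The key point making this work is that the constraint the flag imposes on the last coordinate direction $e_{n+1}$ does not interfere with the upper-left $n\times n$ block: deleting the last row and column of a matrix preserving $\mathcal{F}$ yields precisely a matrix preserving the truncated flag in $\C^n$, and every such matrix arises this way by filling in the last row and column appropriately (using that the flag through $\underbrace{e_{n+1}}_k$ still contains all of $e_1,\dots,e_{k-1}$ and later $e_k,\dots,e_n$). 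So $\pi_n(\fb_k)$ is a Borel subalgebra of $\fg_n$.

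For the second assertion — that $\pi_n(\fb)$ is a subalgebra of $\fb$ — I would argue that $\fg_n \cap \fb_k$ is a Lie subalgebra of $\fb_k$ (being an intersection of two subalgebras of $\fg$), and then show $\pi_n(\fb_k) = \fg_n \cap \fb_k$. The inclusion $\fg_n \cap \fb_k \subseteq \pi_n(\fb_k)$ is clear since $\pi_n$ is the identity on $\fg_n$. For the reverse inclusion I need: for each $x \in \fb_k$, the truncation $x_n \in \fg_n$ already lies in $\fb_k$, i.e., $x_n$ preserves the flag $\mathcal{F}$. This is where the closedness (equivalently, the specific form of the representative) matters: one checks from the explicit flag that the subspace $\lspan\{e_{n+1}\}$-direction only ever appears in positions consistent with zeroing out the last row and column, so that setting the last row and column to zero sends $\fb_k$ into itself. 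A clean way to see this: $\fb_k$ is $\theta$-stable (since $\fb_k \in Q$ with $Q$ a closed orbit, $\fb_k = \theta(\fb_k)$, as closed $K_{n+1}$-orbits correspond to $\theta$-stable Borels), and $\pi_n = \pi_{\fk_{n+1}}$ restricted suitably; more directly, $x_n$ is the upper-left block of the $\theta$-fixed part of $x$, and $\theta$-stability of $\fb_k$ gives $\pi_{\fk_{n+1}}(x) \in \fb_k \cap \fk_{n+1}$, whose upper-left $n\times n$ block is exactly $x_n$. Hence $x_n \in \fb_k$, giving $\pi_n(\fb_k) \subseteq \fb_k$ and therefore $\pi_n(\fb_k) = \fg_n \cap \fb_k$, a subalgebra of $\fb_k$.

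The main obstacle I anticipate is being careful with the bookkeeping of the reindexed flag in the case $k \le n$ — verifying that the truncated flag really is a complete flag in $\C^n$ and that no dimension collapses — together with confirming the identification $\pi_n(\fb_k) = \fk_{n+1}$-block of $\pi_{\fk_{n+1}}(\fb_k)$; once the $\theta$-stability of closed-orbit Borels is invoked this is routine, but it should be stated explicitly. Everything else is a direct matrix computation on the explicit representatives from \cite{Yam}.
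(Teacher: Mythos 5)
Your proof is correct and follows essentially the same route as the paper's: reduce by $K_{n+1}$-equivariance to Yamamoto's explicit representatives and observe that the upper-left $n\times n$ corner is exactly the upper triangular Borel $\fb_{+,n}$ of $\fg_{n}$, which visibly sits inside $\fb$ as a subalgebra. (One small simplification: the truncated flag is literally the standard flag in $\C^{n}$ for every $k$, so $\pi_{n}(\fb)=\fb_{+,n}$ in all cases and neither the reindexing bookkeeping nor the appeal to $\theta$-stability is needed.)
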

\begin{proof}
By the $K_{n+1}$-equivariance of the projection $\pi_{n}$, it suffices to prove the statement for a representative for the orbit $Q$.  Thus we can assume that $\fb$ is the Borel subalgebra in Equation (\ref{eq:bigmatrix}) where $1\leq i\leq n+1$.  Then $\pi_{n}(\fb)=\fb_{+,n}$, where $\fb_{+,n}$ is the $n\times n$ upper triangular matrices in $\fg_{n}$ and clearly $\fb_{+,n}$ is a subalgebra
of $\fb$.  



\end{proof}


 Suppose we are given a sequence $\mathcal{Q}=(Q_{1},\dots, Q_{n+1})$ with $Q_{i}$ a closed $K_{i}$-orbit in $\mathcal{B}_{i}$.  We call $\calQ$ a sequence of closed $K_{i}$-orbits.  We use the data $\calQ$ and Proposition \ref{prop:borels} to construct a special subvariety $\orbittower$ of $\mathcal{B}_{n+1}$.  For this construction, we view $K_{i}\subset K_{i+1}$ by embedding $K_{i}$ in the upper left corner of $K_{i+1}$.  We also make use of the following notation.  If $\fm\subset\fg$ is a subalgebra, we denote by $\fm_{i}$ the image of $\fm$ under the projection $\pi_{i}:\fg\to\fg_{i}$.  
 
   For $\fb\in Q_{n+1}$, $\fb_{n}$ is a Borel subalgebra by
 Proposition \ref{prop:borels}.  Since $K_{n+1}$ acts transitively
on $\mathcal{B}_{n}$, there is  $k\in K_{n+1}$ 
such that $\Ad(k)\fb_{n}\in Q_{n}$ and the variety 
$$
X_{Q_{n}, Q_{n+1}}:=\{\fb\in \B_{n+1}:\fb\in Q_{n+1},\,  \fb_{n}\in Q_{n}\} 
$$
is nonempty.  Proposition \ref{prop:borels} again implies that $(\Ad(k)\fb_{n})_{n-1}=(\Ad(k)\fb)_{n-1}$ is a Borel subalgebra, so that there exists an $l\in K_{n}$ such that $\Ad(l)(\Ad(k)\fb)_{n-1}\in Q_{n-1}$.  Since $K_{n}\subset K_{n+1}$, the variety 
$$X_{Q_{n-1}, Q_{n}, Q_{n+1}}=\{\fb\in\B_{n+1}: \fb\in Q_{n+1},\,  \fb_{n}\in Q_{n}, \, \fb_{n-1}\in Q_{n-1}\} $$
 is nonempty.  Proceeding in this fashion, we can define a nonempty closed subvariety of $\B_{n+1}$ by 
 \begin{equation}\label{eq:orbittower}
 \orbittower=\{\fb\in \B_{n+1}: \fb_{i}\in Q_{i},\, 1\leq i\leq n+1\}.
 \end{equation}
 \begin{thm}\label{thm:areborels}
 Let $\calQ=(Q_{1},\dots, Q_{n+1})$ be a sequence of closed $K_{i}$-orbits in $\B_{i}$.  Then the variety $\orbittower$ is a single Borel subalgebra of $\fg$ that contains the standard Cartan subalgebra of diagonal matrices.  Moreover, if $\fb\subset\fg$ is a Borel subalgebra which contains the diagonal matrices, then $\fb=\orbittower$ for some sequence of closed $K_{i}$-orbits $\calQ$.  
 \end{thm}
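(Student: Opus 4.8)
The plan is to prove the two assertions of Theorem~\ref{thm:areborels} in turn. First I would show that $\orbittower$ is a single point of $\B_{n+1}$, i.e. a single Borel subalgebra; then that this Borel contains the diagonal Cartan; and finally the converse, that every Borel containing the diagonal matrices arises this way. The key structural input is Proposition~\ref{prop:borels}: if $\fb$ lies in a closed $K_{i}$-orbit, then $\pi_{i-1}(\fb)$ is a Borel subalgebra of $\fg_{i-1}$ which is moreover a \emph{subalgebra} of $\fb$. Combined with the explicit list of closed $K_{i}$-orbit representatives from \cite{Yam} (the stabilizer of the flag $e_{1}\subset\dots\subset\underbrace{e_{i}}_{\phantom{k}}\subset\dots$ with $e_{i}$ in position $k$), this gives tight control on the fibres of $\pi_{i-1}$ restricted to a closed orbit.

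For the first assertion I would argue by downward induction on $i$, mirroring the nonemptiness argument already sketched before the theorem statement. Fix $\fb\in\orbittower$; by definition $\fb_{i}:=\pi_{i}(\fb)\in Q_{i}$ is a Borel subalgebra of $\fg_{i}$ for every $i$, and by the subalgebra part of Proposition~\ref{prop:borels}, $\fb_{i}$ is a subalgebra of $\fb_{i+1}$ for every $i<n+1$. So $\fb=\fb_{n+1}$ contains the nested chain $\fb_{1}\subset\fb_{2}\subset\dots\subset\fb_{n+1}$. The crucial counting step: I claim $\orbittower$ consists of exactly one point. One way is a dimension count on the incidence variety — $X_{Q_{i},\dots,Q_{n+1}}$ is a fibration over $X_{Q_{i+1},\dots,Q_{n+1}}$ whose fibre over $\fb$ is the set of $\fb'\in Q_{i}$ with $\pi_{i}(\fb')$... hmm, this needs care. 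The cleaner approach: work with representatives directly. Using $K_{i}$-equivariance of $\pi_{i-1}$ and the known closed-orbit flags, one checks that for a fixed Borel $\fc\in Q_{i-1}\subset\B_{i-1}$, the set of $\fb\in Q_{i}$ with $\pi_{i-1}(\fb)=\fc$ is a single point. Indeed, from the flag description, a Borel in the closed orbit $Q_{i}$ is determined by a complete flag in $\C^{i}$ of the prescribed combinatorial shape, and once we know its image flag in $\C^{i-1}$ (obtained by intersecting with $\C^{i-1}=\mathrm{span}\{e_{1},\dots,e_{i-1}\}$, appropriately), the lift is forced because the "new" line must be spanned by the single distinguished basis vector dictated by which $Q_{i}$ we are in. Running this from $i=n+1$ down to $i=1$ shows $|\orbittower|=1$.

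Having a single Borel $\fb$, I would then show $\fh\subset\fb$, where $\fh$ is the diagonal Cartan. Each $\fb_{i}\in Q_{i}$, a \emph{closed} $K_{i}$-orbit; by \cite{Yam} every closed $K_{i}$-orbit representative is the stabilizer of a flag built from the standard basis vectors $e_{1},\dots,e_{i}$, hence is a Borel subalgebra of $\fg_{i}$ containing the diagonal Cartan $\fh_{i}$ of $\fg_{i}$. In particular $\fb=\fb_{n+1}\supset\fh_{n+1}=\fh$. For the converse, let $\fb$ be any Borel of $\fg$ containing $\fh$. Then for each $i$, $\pi_{i}(\fb)$ is spanned by $\fh_{i}$ together with a set of root spaces, and since $\fb$ is solvable and contains $\fh$, $\pi_{i}(\fb)$ is a Borel of $\fg_{i}$ containing $\fh_{i}$ — this uses that the projection $\fg\to\fg_{i}$ restricted to $\fh$-stable subspaces is just "forget the roots not supported on the first $i$ coordinates", and the image of a system of positive roots under this truncation is again a system of positive roots of $\fgl(i,\C)$ (a standard fact about the $\fgl$ tower). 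Moreover $\pi_{i}(\fb)$ visibly contains $e_{i}$ in an appropriate position or its negative, forcing its $K_{i}$-orbit to be \emph{closed} — concretely, $\pi_{i}(\fb)$ is $K_{i}$-conjugate to exactly one of the $\fb_{1},\dots,\fb_{i}$ from the \cite{Yam} list. Setting $Q_{i}$ to be that closed orbit gives a sequence $\calQ$ with $\fb\in\orbittower$, and by the first part $\orbittower$ is a single Borel, so $\fb=\orbittower$.

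The main obstacle I anticipate is the uniqueness count $|\orbittower|=1$: the nonemptiness was handled inductively before the theorem, but pinning down that each successive fibre of $\pi_{i-1}\colon Q_{i}\to\{\text{Borels of }\fg_{i-1}\}$ over a point of $Q_{i-1}$ is a \emph{single} point requires either a careful dimension bookkeeping (comparing $\dim Q_{i}$, $\dim Q_{i-1}$, and the fibre dimension of $\pi_{i-1}$) or an explicit check with the flag representatives. I would favor the explicit check, since the closed-orbit flags in \cite{Yam} are so rigid that the lift of a flag from $\C^{i-1}$ to $\C^{i}$ within a fixed closed orbit is manifestly forced. A secondary but routine point is verifying that truncation $\fg\to\fg_{i}$ sends Borels-containing-$\fh$ to Borels-containing-$\fh_{i}$ whose orbit is closed; this is elementary root combinatorics for the $\fgl$ tower but should be stated cleanly.
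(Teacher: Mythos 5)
Your overall strategy is sound and in places genuinely different from the paper's: for uniqueness the paper argues group-theoretically (if $\fb'=\Ad(k)\fb$ with both in $\orbittower$, then induction gives $\Ad(k)\fb_n=\fb_n=\fm$, so $k_n$ lies in the Borel subgroup $M$, and since $\fm\subset\fb$ by Proposition \ref{prop:borels} one gets $k\in B$ after splitting off the centre), whereas you propose the explicit flag computation showing that within a fixed closed orbit the lift of a flag from $\C^{i-1}$ to $\C^{i}$ is forced; both work, and your fibre computation does go through (a flag in the closed orbit $Q_k$ is exactly a flag $W_\bullet$ with $W_j\subset\C^{i-1}$ for $j<k$ and $e_i\in W_j$ for $j\ge k$, so $W_\bullet$ is recovered from its truncation). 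For the converse the paper uses a clean counting argument ($(n+1)!$ distinct nonempty singletons $\orbittower$ versus $(n+1)!$ Borels containing $\fh$), while you argue directly that truncating a coordinate Borel gives a coordinate Borel of $\fg_i$ lying in a closed $K_i$-orbit; that is also correct, just more work.

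There is, however, one genuine error in your write-up: the claim that $\fb_i\in Q_i$ with $Q_i$ closed implies $\fh_i\subset\fb_i$ "because the orbit representative is a coordinate-flag stabilizer." Membership in the orbit only gives $\fb_i=\Ad(k)\fb_{\mathrm{rep}}$ for some $k\in K_i$, and a generic such conjugate does \emph{not} contain the diagonal Cartan (e.g.\ $\Ad(k)\fb_+$ for generic $k\in GL(n,\C)\subset K_{n+1}$ lies in $Q_{+,n+1}$ but does not contain $\fh$). As written this step is a non sequitur, and it is precisely the content that needs proving. The fix is available inside your own framework: run your forced-lift induction from the bottom up to conclude that the unique element of $\orbittower$ is itself the stabilizer of a \emph{coordinate} flag (since $\fb_1$ is, and each lift adjoins the line $\C e_i$), hence contains $\fh$; alternatively, use the paper's route, where $\fh_n\subset\fm\subset\fb$ by induction and Proposition \ref{prop:borels}, and $\fh=\fh_n\oplus\fz(\fg)$ with $\fz(\fg)$ in every Borel. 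Either repair should be made explicit, since the Cartan containment is also what your converse argument silently relies on when it identifies which closed orbit $\pi_i(\fb)$ lands in.
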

 
 \begin{proof}
 Let $\fh\subset\fg$ be the standard Cartan subalgebra of diagonal matrices.
  We prove that $\orbittower$ is a single Borel subalgebra in $\fg$
 containing $\fh$ by induction on $n$, the case $n=1$ being trivial.  Let $\fb,\, \fb^{\prime}\in\orbittower$.  Then $\fb_{n}, \fb_{n}^{\prime}\in X_{Q_{1},\dots, Q_{n}}$ and by induction $\fb_{n}^{\prime}=\fb_{n}=\fm$, where $\fm\subset\fg_{n}$ is a Borel subalgebra in $\fg_{n}$ containing the standard Cartan $\fh_{n}$.  Since $\fb,\, \fb^{\prime}\in\orbittower$, it follows that $\fb,\, \fb^{\prime}\in Q_{n+1}$, so there exists a $k\in K_{n+1}$ such that $\fb^{\prime}=\Ad(k) \fb$.  Thus, 
 \begin{equation}\label{eq:fm}
 \Ad(k)\fm=\fm.
 \end{equation}
  Suppose $k=\left[\begin{array}{cc} k_{n} & 0\\
 0 & \lambda\end{array}\right]$, with $k_{n}\in GL(n,\C)$ and $\lambda\in\C^{\times}$.  Then Equation (\ref{eq:fm}) implies that $k_{n}\in M$, where $M$ is the Borel subgroup of $GL(n,\C)$ corresponding to $\fm$.  By the second statement of Proposition \ref{prop:borels}, $M\subset B$ from which it follows that $k\in B$, since 
 $$
 k=\left[\begin{array}{cc}
 k_{n}\lambda^{-1} & 0 \\
 0& 1\end{array}\right] \hfill \left[\begin{array} {cc} \lambda I_{n} & 0\\
 0 & \lambda \end{array}\right],
 $$
 where $I_{n}$ denotes the $n\times n$ identity matrix, and the centre of $G$ is contained in all Borel subgroups of $G$.  Thus, $\fb^{\prime} = \Ad(k)\fb=\fb$.
  To see that $\fh\subset\fb$, note that by the induction hypothesis $\fh_{n}\subset\fm\subset\fb$.  But then $\fh=\fh_{n}\oplus \fz(\fg)$, where $\fz(\fg)$ is the centre of $\fg$, so $\fh\subset\fb$.

 
 We now show that every $\fb\subset\fg$ with $\fh\subset\fb$ can be realized as $\orbittower$ for some sequence of closed $K_{i}$-orbits $\calQ=(Q_{1},\dots, Q_{n+1})$.  It is easy to see that if $\calQ$ and $\calQ^{\prime}$ are two different sequences of $K_{i}$-orbits, then $\orbittower\neq X_{\calQ^{\prime}}$.  Since there are exactly $i$ closed $K_{i}$-orbits in $\B_{i}$ (see Section \ref{s:basicfacts}), there are $(n+1)!$ varieties $X_{\calQ}$.  But this is precisely the number of Borel subalgebras in $\fg$ that contain $\fh$.   
 \end{proof}
 
 \begin{nota}
 In light of Theorem \ref{thm:areborels}, we refer to the Borel $\orbittower$ as $\borel$ for the rest of the paper. 
  \end{nota}


To find the Borel subalgebras that contain elements of $\Phi^{-1}(0)_{sreg}$, Proposition \ref{prop:bothconditions} suggests we consider $\borel$, where the sequence $\calQ$ is given by $\calQ=(Q_{1},\dots, Q_{n+1})$ with $Q_{i}=Q_{+,i}$ or $Q_{-,i}$ for all $i$.  Since $Q_{+,1}=Q_{-,1}$, there are $2^{n}$ such Borel subalgebras, and they are precisely the Borel subalgebras identified as the irreducible components of $\overline{\Phi^{-1}(0)_{sreg}}$ in \cite{Col1}, Theorem 5.5.  
\begin{exam}\label{ex:Borelexam}
For $\fg=\fgl(3,\C)$, we have four such Borel subalgebras:
  $$
\begin{array}{cc}
\begin{array}{c}
\mathfrak{b}_{Q_{-},Q_{-}}=\left[\begin{array}{ccc} 
h_{1} & 0 & 0 \\
a_{1}& h_{2} &0\\
a_{2}& a_{3} & h_{3}\end{array}\right]
 \end{array}
&
\begin{array}{c}
\mathfrak{b}_{Q_{+}, Q_{+}}=\left[\begin{array}{ccc} 
h_{1} & a_{1} & a_{2} \\
0 & h_{2} & a_{3}\\
0 & 0 & h_{3}\end{array}\right]
\end{array}
\\
& \\
\begin{array}{c}
\mathfrak{b}_{Q_{+},Q_{-}}=\left[\begin{array}{ccc} 
h_{1}& a_{1} & 0 \\
0 & h_{2} & 0\\
a_{2} & a_{3} & h_{3}\end{array}\right]
\end{array}
&
\begin{array}{c}
\mathfrak{b}_{Q_{-},Q_{+}}=\left[\begin{array}{ccc} 
h_{1} & 0 & a_{1} \\
a_{2} & h_{2}& a_{3}\\
0 & 0 & h_{3}\end{array}\right]
\end{array}
\end{array},
$$
where $a_{i}, h_{i}\in\C$.  
\end{exam}

\begin{thm}\label{thm:mainthm}
If $x\in\Phi^{-1}(0)_{sreg}$, then $x\in\borel$ where $\calQ=(Q_{1},\dots, Q_{n+1})$ is a sequence of closed $K_{i}$-orbits with $Q_{i}=Q_{+,i}$ or $Q_{-,i}$ for all $i$.  Further, let $\nilrad=[\borel,\borel]$ and $\nilrad^{reg}$ denote the regular elements of $\nilrad$.  Then 
\begin{equation}\label{eq:irred}
\Phi^{-1}(0)_{sreg}=\coprod_{\calQ} \nilrad^{reg}
\end{equation}
is the irreducible component decomposition of the variety $\Phi^{-1}(0)_{sreg}$, where $\calQ$ runs over all $2^{n}$ sequences of closed $K_{i}$-orbits $(Q_{1},\dots, Q_{n+1})$ with $Q_{i}=Q_{+,i}$ or $Q_{-,i}$.  
\end{thm}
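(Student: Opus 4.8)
The plan is to pin down the unique Borel subalgebra through a strongly regular nilpotent element by descending the flag tower $\fg=\fg_{n+1}\supset\fg_n\supset\dots\supset\fg_1$, combining the characterization (\ref{eq:introcons}) of $\Phi^{-1}(0)_{sreg}$ with the $\fg_i$-analogues of Propositions \ref{prop:borels}, \ref{prop:bothconditions} and \ref{prop:converse} and with Theorem \ref{thm:areborels}. For the inclusion $\Phi^{-1}(0)_{sreg}\subseteq\coprod_{\calQ}\nilrad^{reg}$, I would take $x\in\Phi^{-1}(0)_{sreg}$ and observe that, by (\ref{eq:introcons}), each $x_i\in\fg_i$ is regular nilpotent, hence lies in a unique Borel $\fb^{(i)}$ of $\fg_i$, and for $i\ge 2$ satisfies conditions (1) and (2) of (\ref{con:theconditions}) in $\fg_i$. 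Proposition \ref{prop:bothconditions} applied to $\fg_i$ then forces the $K_i$-orbit $Q_i$ of $\fb^{(i)}$ to be $Q_{+,i}$ or $Q_{-,i}$, in particular closed; Proposition \ref{prop:borels} in $\fg_i$ gives that $\pi_{i-1}(\fb^{(i)})$ is a Borel of $\fg_{i-1}$, and since $x_{i-1}=\pi_{i-1}(x_i)$ lies in it and is regular nilpotent, $\pi_{i-1}(\fb^{(i)})=\fb^{(i-1)}$. Using $\pi_{i-1}=\pi_{i-1}\circ\pi_i$ and iterating downward from $i=n+1$, one gets $\fb^{(i)}=\pi_i(\fb^{(n+1)})$ for all $i$, so $\fb:=\fb^{(n+1)}$ satisfies $\fb_i\in Q_i$ for every $i$, with $Q_i\in\{Q_{+,i},Q_{-,i}\}$ (and $Q_1=Q_{+,1}=Q_{-,1}$). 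Hence $\fb\in\orbittower$, and Theorem \ref{thm:areborels} gives $\fb=\borel$. Since $x=x_{n+1}$ is regular nilpotent (nilpotent by Remark \ref{rem_git}, regular because strongly regular) and lies in $\borel$, we conclude $x\in\nilrad^{reg}$ with $\nilrad=[\borel,\borel]$.

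For the reverse inclusion I would fix $\calQ=(Q_1,\dots,Q_{n+1})$ with $Q_i\in\{Q_{+,i},Q_{-,i}\}$, put $\fb=\borel$ and $\nilrad=[\fb,\fb]$, and recall from the construction (\ref{eq:orbittower}) that each $\fb_i=\pi_i(\fb)$ is a Borel of $\fg_i$ lying in $Q_i$. Given $x\in\nilrad^{reg}$, so $x$ is regular nilpotent in $\fg$ and $x_i=\pi_i(x)\in\fb_i$ for all $i$, I would apply Proposition \ref{prop:converse} in $\fg_i$ downward from $i=n+1$: at each stage, $x_i$ regular nilpotent and $\fb_i\in Q_{+,i}$ or $Q_{-,i}$ yield that $x_{i-1}=(x_i)_{i-1}$ is regular nilpotent and $\fz_{\fg_{i-1}}(x_{i-1})\cap\fz_{\fg_i}(x_i)=0$. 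Running $i$ from $n+1$ down to $2$ recovers exactly the conditions in (\ref{eq:introcons}), so $x\in\Phi^{-1}(0)_{sreg}$. With the first part, this gives $\Phi^{-1}(0)_{sreg}=\bigcup_{\calQ}\nilrad^{reg}$, over the $2^n$ sequences $\calQ$ with $Q_i\in\{Q_{+,i},Q_{-,i}\}$ for $i>1$.

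Finally I would check that (\ref{eq:irred}) is the irreducible component decomposition. The union is disjoint: a regular nilpotent lies in a unique Borel subalgebra, and distinct $\calQ$ give distinct Borels $\borel$ (proof of Theorem \ref{thm:areborels}), so $\nilrad^{reg}\cap\fn_{\calQ'}^{reg}=\emptyset$ for $\calQ\ne\calQ'$. By (\ref{eq:stdregnil}) and conjugation, each $\nilrad^{reg}$ is a nonempty Zariski-open subset of the affine space $\nilrad$, hence irreducible with closure $\nilrad$ in $\fg$. Moreover $\nilrad^{reg}\subseteq\Phi^{-1}(0)_{sreg}$, while any $y\in\nilrad\cap\Phi^{-1}(0)_{sreg}$ is regular (being strongly regular) and lies in the nilradical $\nilrad$, so $\nilrad\cap\Phi^{-1}(0)_{sreg}=\nilrad^{reg}$ and therefore $\nilrad^{reg}$ is closed in $\Phi^{-1}(0)_{sreg}$. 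Thus $\Phi^{-1}(0)_{sreg}$ is a finite disjoint union of irreducible subsets, each closed (and hence open) in $\Phi^{-1}(0)_{sreg}$, and these subsets are exactly its irreducible components.

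The main obstacle, I expect, is the bookkeeping of this downward induction: one has to be sure that Propositions \ref{prop:borels}, \ref{prop:bothconditions} and \ref{prop:converse}, which are stated for $\fg_{n+1}$, transfer verbatim to each $\fg_i$ with the group $K_i$, and that the projections $\pi_i$ fit together so the tower of Borels closes up consistently. The other point needing some care is the verification in the last paragraph that each $\nilrad^{reg}$ is relatively closed in $\Phi^{-1}(0)_{sreg}$, which is what promotes the set-theoretic partition to the irreducible component decomposition.
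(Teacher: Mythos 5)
Your proposal is correct, and the first statement together with both inclusions of (\ref{eq:irred}) follow essentially the paper's own scheme: an induction (which you unroll as a downward iteration through the tower $\fg_{n+1}\supset\dots\supset\fg_1$) combining Remark \ref{rem_git}, Theorem \ref{d:sreg}(2), Propositions \ref{prop:borels}, \ref{prop:bothconditions} and \ref{prop:converse}, and Theorem \ref{thm:areborels}. Where you genuinely diverge is in the last step, promoting the set-theoretic partition to the irreducible component decomposition. The paper invokes \cite{KW1}, Theorem 3.12 --- that $\Phi^{-1}(0)_{sreg}$ has pure dimension $\binom{n+1}{2}$ and that its irreducible and connected components coincide --- to argue that each $\nilrad^{reg}$, being irreducible of full dimension, is open in some component, hence open and closed in a connected set and therefore equal to it. You instead observe that strongly regular implies regular, so $\nilrad^{reg}=\nilrad\cap\Phi^{-1}(0)_{sreg}$ is relatively \emph{closed} in the fibre, and a finite disjoint union of nonempty closed irreducible subsets of a Noetherian space is automatically its component decomposition. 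Your route is more elementary and self-contained (it needs no input about purity of dimension or connectedness of components), at the cost of not recording the extra structural facts about $\Phi^{-1}(0)_{sreg}$ that the paper's citation supplies; the paper's route, conversely, is shorter given that external theorem, which is in any case needed elsewhere (e.g.\ for Corollary \ref{c:littlecor}, identifying the components with $A$-orbits). Both arguments are sound.
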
 

\begin{proof}

We prove the first statement by induction on $n$, the case $n=1$ being trivial.  Let $x\in\Phi^{-1}(0)_{sreg}$.  Then $x$ is regular nilpotent and therefore contained in a unique Borel subalgebra $\fb\subset\fg$.  Further, since $x\in\fg_{sreg}$, part (2) of Theorem 2.1 implies that $x$ satisfies the conditions of Equation (3.1) for $i=2,\dots, n+1$, so by Proposition 3.10, $\fb\in Q_{+,n+1}$ or $\fb\in Q_{-,n+1}$.  It also follows immediately from part (2) of Theorem 2.1 and Remark 2.2 that $x_{n}\in \Phi_{n}^{-1}(0)_{sreg}$, where $\Phi_{n}:\fgl(n,\C)\to \C^{\dnone}$ is the Kostant-Wallach map for $\fgl(n,\C)$, that is, $\Phi_{n}(y)=(f_{1,1}(y)\dots, f_{n,n}(y))$ for $y\in\fgl(n,\C)$.  By induction, $x_{n}\in\fb_{\calQ_{n}}\subset\fg_{n}$, where $\calQ_{n}=(Q_{1},\dots, Q_{n})$ is a sequence of closed $K_{i}$-orbits with $Q_{i}=Q_{+,i}$ or $Q_{-,i}$ for $i=1,\dots, n$. Since $x_{n}$ is regular nilpotent and $\fb_{n}$ is a Borel subalgebra by Proposition 4.1, it follows that $\fb_{n}=\fb_{\calQ_{n}}$.  It then follows from definitions that $\fb=\fb_{\calQ}$ where $\calQ=(Q_{1},\dots, Q_{n}, Q_{n+1})$ and $Q_{n+1}= Q_{+,n+1}$ or $Q_{-,n+1}$.  

Let $\calQ=(Q_{1},\dots, Q_{n+1})$ be a sequence of closed $K_{i}$-orbits with $Q_{i}=Q_{+,i}$ or $Q_{-,i}$.  To prove Equation (\ref{eq:irred}), we first observe that $\nilrad^{reg}\subset\Phi^{-1}(0)_{sreg}$.  Indeed, let $x\in\nilrad^{reg}$, then $x\in\borel$ is regular nilpotent and $\borel\in Q_{+,n+1}$ or $Q_{-,n+1}$.  Proposition \ref{prop:converse} then implies that $x_{n}$ is regular nilpotent and $\fz_{\fg_{n}}(x_{n})\cap \fz_{\fg}(x)=0$.  But then $x_{n}\in\fn_{\calQ_{n}}^{reg}$, with $\calQ_{n}=(Q_{1},\dots, Q_{n})$.  By induction we conclude that $x_{n}\in \Phi_{n}^{-1}(0)_{sreg}$.  By part (2) of Theorem \ref{d:sreg}, it follows that $x$ is strongly regular and hence $\nilrad^{reg}\subset\Phi^{-1}(0)_{sreg}$.  

 We now show that $\nilrad^{reg}$ is an irreducible component of $\Phi^{-1}(0)_{sreg}$.  Observe that $\nilrad^{reg}$ is an irreducible variety of dimension $\dnone$.  By \cite{KW1}, Theorem 3.12,  $\Phi^{-1}(0)_{sreg}$ is a variety of pure dimension $\dnone$ whose irreducible and connected components coincide. Thus, if 
 $$
 \coprod_{i=1}^{k} (\Phi^{-1}(0)_{sreg})(i)
 $$
 is the irreducible component decomposition of $\Phi^{-1}(0)_{sreg}$, then $\nilrad^{reg}\subset (\Phi^{-1}(0)_{sreg})(j)$ for some $j$ and $\nilrad^{reg}$ is open in $(\Phi^{-1}(0)_{sreg})(j)$.  From the first statement of the theorem, it follows that 
\begin{equation*}
(\Phi^{-1}(0)_{sreg})(j)=\coprod_{\calQ^{\prime}}\fn_{\calQ^{\prime}}^{reg},
\end{equation*}
where the disjoint union is taken over a subset of the set of all sequences $(Q_{1}^{\prime},\dots, Q_{n+1}^{\prime})$ with $Q^{\prime}_{i}=Q_{+,i}$ or $Q_{-,i}$.  Thus $\nilrad^{reg}$ is both open and closed in $(\Phi^{-1}(0)_{sreg})(j)$ forcing $(\Phi^{-1}(0)_{sreg})(j)=\nilrad^{reg}$, since $(\Phi^{-1}(0)_{sreg})(j)$ is connected.  
\end{proof}

Theorem \ref{thm:mainthm} provides an alternative proof of the following
corollary from \cite{Col1} (see Theorem 5.2).
  Recall that the group $A\cong\C^{\dnone}$ is obtained by integrating the Lie algebra of Hamiltonian vector fields $\fa=\mbox{span}\{\xi_{f_{i,j}}: 1\leq i\leq n,\, 1\leq j\leq i\}$ on $\fg$ (see Section \ref{s:background}).  
\begin{cor}\label{c:littlecor}
There are $2^{n}$ $A$-orbits in $\Phi^{-1}(0)_{sreg}$. 
\end{cor}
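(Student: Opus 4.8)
The plan is to deduce the count of $A$-orbits in $\Phi^{-1}(0)_{sreg}$ directly from Theorem \ref{thm:mainthm}. By Theorem \ref{d:sreg}(3) together with Theorem 3.12 of \cite{KW1}, the irreducible components of $\Phi^{-1}(0)_{sreg}$ are precisely the strongly regular $A$-orbits $A\cdot x$ for $x\in\Phi^{-1}(0)_{sreg}$. Theorem \ref{thm:mainthm} identifies these irreducible components as the $2^{n}$ subvarieties $\nilrad^{reg}$, one for each sequence $\calQ=(Q_{1},\dots,Q_{n+1})$ of closed $K_{i}$-orbits with $Q_{i}=Q_{+,i}$ or $Q_{-,i}$ for $i>1$. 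So it suffices to observe that each $\nilrad^{reg}$ is a single $A$-orbit.

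First I would recall from Theorem \ref{thm:mainthm} (or its proof, using the decomposition $\Phi^{-1}(0)_{sreg}=\coprod_\calQ \nilrad^{reg}$) that each $\nilrad^{reg}$ is both open and closed in $\Phi^{-1}(0)_{sreg}$, hence is connected and irreducible of dimension $\dnone$. Then I would invoke the fact, from Theorem 3.12 of \cite{KW1}, that $\Phi^{-1}(0)_{sreg}$ has pure dimension $\dnone$ and its irreducible components coincide with its connected components, which in turn coincide with the strongly regular $A$-orbits (each $A\cdot x$ has dimension $\dnone$ by Theorem \ref{d:sreg}(3) and is therefore open in the component containing it, and being a connected component it is also closed). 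Since $\nilrad^{reg}$ is a single connected component, it is therefore exactly one $A$-orbit.

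Finally I would conclude: as $\calQ$ ranges over the $2^{n}$ admissible sequences, the varieties $\nilrad^{reg}$ are pairwise disjoint (they lie in distinct Borel subalgebras $\borel$, which are distinct by Theorem \ref{thm:areborels}), so they give $2^{n}$ distinct $A$-orbits, and by the first assertion of Theorem \ref{thm:mainthm} every point of $\Phi^{-1}(0)_{sreg}$ lies in one of them. Hence there are exactly $2^{n}$ $A$-orbits in $\Phi^{-1}(0)_{sreg}$. The only subtlety worth spelling out is the identification of connected components of $\Phi^{-1}(0)_{sreg}$ with $A$-orbits, which is not new here but is the hinge of the argument; everything else is bookkeeping with the results already established.
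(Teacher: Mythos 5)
Your proposal is correct and follows essentially the same route as the paper: the paper's proof also invokes Theorem 3.12 of \cite{KW1} to identify the $A$-orbits in $\Phi^{-1}(0)_{sreg}$ with its irreducible components and then reads off the count $2^{n}$ from Theorem \ref{thm:mainthm}. Your additional remarks about openness, closedness, and pure dimension are just an unpacking of what that cited theorem already provides.
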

\begin{proof}
By \cite{KW1}, Theorem 3.12 the $A$-orbits in $\Phi^{-1}(0)_{sreg}$ coincide with the irreducible components of $\Phi^{-1}(0)_{sreg}$.  The result then follows immediately from Theorem \ref{thm:mainthm}.  
\end{proof}

\begin{exam}\label{ex:Borelregexam}
For $\fg=\fgl(3,\C)$, Theorem 4.5 implies that the four $A$-orbits in $\Phi^{-1}(0)_{sreg}$ are the regular nilpotent elements of the Borel subalgebras given in Example 4.4.  
  $$
\begin{array}{cc}
\begin{array}{c}
\mathfrak{n}_{Q_{-},Q_{-}}^{reg}=\left[\begin{array}{ccc} 
0 & 0 & 0 \\
a_{1}& 0 &0\\
a_{3}& a_{2} & 0\end{array}\right]
 \end{array}
&
\begin{array}{c}
\mathfrak{n}_{Q_{+}, Q_{+}}^{reg}=\left[\begin{array}{ccc} 
0 & a_{1} & a_{3} \\
0 & 0 & a_{2}\\
0 & 0 & 0\end{array}\right]
\end{array}
\\
& \\
\begin{array}{c}
\mathfrak{n}_{Q_{+},Q_{-}}^{reg}=\left[\begin{array}{ccc} 
0& a_{1} & 0 \\
0 & 0& 0\\
a_{2} & a_{3} & 0\end{array}\right]
\end{array}
&
\begin{array}{c}
\mathfrak{n}_{Q_{-},Q_{+}}^{reg}=\left[\begin{array}{ccc} 
0 & 0 & a_{1} \\
a_{2} & 0& a_{3}\\
0 & 0 & 0\end{array}\right]
\end{array}
\end{array},
$$
where $a_{1}, \, a_{2}\in\C^{\times}$ and $a_{3}\in\C$.  
\end{exam}

\begin{rem}
It follows from Theorem \ref{thm:mainthm} that the irreducible components of the variety $\overline{\Phi^{-1}(0)_{sreg}}$ are precisely the nilradicals $\nilrad$ of the Borel subalgebras $\borel$.  This result was proved earlier in \cite{Col1}, Theorem 5.5, but we regard the argument in this paper as more conceptual and the results more extensive.  The connection between the strongly regular elements $\Phi^{-1}(0)_{sreg}$ and the regular nilpotent elements of the Borel subalgebras $\borel$ was not understood in \cite{Col1} (see Remark \ref{rem:intro}). 

\end{rem}

\section{Borel subalgebras and strongly regular elements}\label{s:sregborels}
In this section, we show that every Borel subalgebra $\fb\subset\fg$ contains strongly regular elements.  We first consider the Borel subalgebras $\fb\subset\fg$ which contain the standard Cartan subalgebra $\fh$ of diagonal matrices in $\fg$.  We refer to such Borel subalgebras as \emph{standard} Borel subalgebras.

Recall the involution $\theta$ from Notation \ref{n:thetadef}, so
$K_{n+1} = G^{\theta}$.  
Let $H$ be the Cartan subgroup of $G$ with Lie algebra $\fh$.  Let $W=N_{G}(H)/H$ denote the Weyl group of $G$ with respect to $H$
and consider its subgroup $W_{K_{n+1}} = N_{K_{n+1}}(H)/H$, the Weyl group
of $K_{n+1}$.

\begin{lem}\label{l:kweyl}
Let $\fb, \, \fb^{\prime}$ be standard Borel subalgebras of $\fg$ which generate the same closed $K_{n+1}$-orbit in $\mathcal{B}_{n+1}$.  Then $\fb^{\prime}=w\cdot\fb$ for some $w\in W_{K_{n+1}}$.
\end{lem}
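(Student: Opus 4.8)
The plan is to use the explicit classification of closed $K_{n+1}$-orbits from Section \ref{s:basicfacts} together with the description of standard Borel subalgebras as the $X_{\calQ}$ from Theorem \ref{thm:areborels}. First I would recall that two standard Borel subalgebras $\fb, \fb'$ lying in the same closed orbit $Q = Q_m$ are both flag stabilizers of the form
$$
e_{\rho(1)} \subset \dots \subset \underbrace{e_{n+1}}_{m} \subset \dots
$$
where the entries of each flag, apart from the distinguished $e_{n+1}$ in the $m$-th slot, are a permutation $\rho$ of $\{e_1, \dots, e_n\}$ in some order: indeed, since $\fb$ is standard it is the stabilizer of a coordinate flag, and since it lies in $Q_m$ its $K_{n+1}$-conjugate must be $\fb_m$, whose flag has $e_{n+1}$ precisely in position $m$. (Concretely: a standard Borel in $Q_m$ corresponds to an ordering of the basis vectors in which $e_{n+1}$ occupies the $m$-th position, by the analysis underlying Theorem \ref{thm:areborels}.) Thus $\fb$ and $\fb'$ are stabilizers of two coordinate flags that agree in the distinguished slot but may reorder $e_1, \dots, e_n$ arbitrarily in the remaining slots.

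Next, given such $\fb, \fb'$, I would produce $w \in W_{K_{n+1}}$ carrying one flag to the other. Let $\fb = \text{Stab}(\mathcal{F})$ and $\fb' = \text{Stab}(\mathcal{F}')$ with $\mathcal{F}, \mathcal{F}'$ coordinate flags both having $e_{n+1}$ in position $m$. Define a permutation of $\{1, \dots, n\}$ by matching up the remaining entries: there is a permutation matrix $\dot{w} \in GL(n,\C) \subset GL(n+1,\C)$ (acting as the identity on $e_{n+1}$) with $\dot{w}(\mathcal{F}) = \mathcal{F}'$, hence $\Ad(\dot w)\fb = \fb'$. Since $\dot w$ fixes $e_{n+1}$ and permutes $e_1, \dots, e_n$ among themselves, $\dot w \in GL(n,\C) \times GL(1,\C) = K_{n+1}$; and $\dot w$ normalizes $H$ since it is a permutation matrix. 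Therefore its class $w \in N_{K_{n+1}}(H)/H = W_{K_{n+1}}$ satisfies $w \cdot \fb = \fb'$.

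The one point requiring a little care — and the likely main obstacle — is establishing the structural claim in the first paragraph: that a \emph{standard} Borel subalgebra lying in the closed orbit $Q_m$ is necessarily the stabilizer of a coordinate flag with $e_{n+1}$ in position $m$ (as opposed to some noncoordinate flag). This follows because $\fb \supset \fh$ forces $\fb$ to be a sum of $\fh$ and root spaces, hence the stabilizer of a coordinate flag $e_{\pi(1)} \subset \dots \subset e_{\pi(n+1)}$ for a unique $\pi \in S_{n+1}$; and the $K_{n+1}$-orbit of such a Borel is closed iff it contains some $\fb_k$, which by Yamamoto's classification recalled above happens precisely when $\pi^{-1}(n+1)$ equals the slot index, so $m = \pi^{-1}(n+1)$ is forced and $\fb'$ corresponds to some $\pi'$ with $(\pi')^{-1}(n+1) = m$ as well. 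Once this is in place, the permutation $\dot w = \pi' \circ \pi^{-1}$ restricted appropriately does the job and lies in $W_{K_{n+1}}$ as above. I would also remark that the converse — every $W_{K_{n+1}}$-translate of $\fb$ lies in the same closed orbit — is immediate since $W_{K_{n+1}} \subset K_{n+1}/(K_{n+1}\cap H)$ acts within $K_{n+1} \cdot \fb$.
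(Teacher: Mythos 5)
Your proof is correct, but it takes a genuinely different route from the one in the paper. The paper's argument is short and structural: writing $\fb'=k\cdot\fb$ with $k\in K_{n+1}$, it observes that $k\cdot\fh$ is a $\theta$-stable Cartan subalgebra of $\fb'$, invokes the general fact that any two $\theta$-stable Cartan subalgebras of a Borel subalgebra are conjugate by an element of $B'\cap K_{n+1}$ (Richardson--Springer, Proposition 1.2.1), and concludes that some $bk$ lies in $N_{K_{n+1}}(H)$ and carries $\fb$ to $\fb'$. Your argument instead works entirely through the explicit combinatorics: a standard Borel is the stabilizer of a coordinate flag, the closed orbit containing it is determined by the slot occupied by $e_{n+1}$, and the permutation matrix matching up the remaining basis vectors is visibly in $N_{K_{n+1}}(H)$. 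Both are complete; yours buys an explicit representative $\dot w$ (and in effect re-derives which coordinate flags lie in which closed orbit, which is consistent with the discussion around Theorem \ref{thm:areborels}), while the paper's proof is shorter, avoids the case analysis, and would generalize verbatim to any symmetric subgroup and any closed orbit without an explicit orbit classification. The one place to be slightly careful in your write-up is the phrase ``closed iff it contains some $\fb_k$'': what you actually use is that every standard Borel here is $\theta$-stable (since $\fh\subset\fk_{n+1}$), hence automatically lies in a closed orbit, and that the orbits $Q_1,\dots,Q_{n+1}$ are pairwise distinct, so the position of $e_{n+1}$ is an invariant of the orbit; as stated your sentence is a little loose but the surrounding argument supplies what is needed.
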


\begin{proof}
We have $\fb^{\prime}=k\cdot\fb$ for some $k\in K_{n+1}$, and we let 
$B^{\prime}$ be the Borel subgroup with Lie algebra $\fb^{\prime}$.
 Then the Cartan subalgebra $k\cdot \fh\subset\fb^{\prime}$ is $\theta$-stable.  Since any two $\theta$-stable subalgebras of $\fb^{\prime}$ are conjugate by an element of $B^{\prime}\cap K_{n+1}$ (see for example Proposition 1.2.1 \cite{RSexp}), there exists $b\in B^{\prime}\cap K_{n+1}$ such that $bk\cdot \fh=\fh$.  But then $bk\in N_{K_{n+1}}(H)$ and $bk\cdot \fb=\fb^{\prime}$ and the result follows.  
\end{proof}

\begin{prop}\label{p:standard}
Let $\fb\subset\fg$ be a standard Borel subalgebra.  Then $\fb$ contains strongly regular elements.  
\end{prop}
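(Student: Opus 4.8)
The plan is to reduce the claim to the $2^n$ ``good'' standard Borel subalgebras $\borel$ identified in Theorem \ref{thm:mainthm}, for which strong regularity of generic nilpotent elements is already established, and then transfer information to an arbitrary standard Borel by a Weyl group (really torus rescaling) argument. First I would recall that by Theorem \ref{thm:areborels} every standard Borel subalgebra $\fb$ equals $X_{\calQ}$ for a unique sequence of closed $K_i$-orbits $\calQ = (Q_1, \dots, Q_{n+1})$, where each $Q_i$ is one of the $i$ closed $K_i$-orbits in $\B_i$. The key structural point is that the set of strongly regular elements of $\fb$ is controlled by the compatibility, at every level $i$, of the projection $\fb_i$ with the conditions (\ref{con:theconditions}); and Proposition \ref{prop:bothconditions} (applied inside each $\fg_i$) says those conditions force $Q_i \in \{Q_{+,i}, Q_{-,i}\}$. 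So the honest content here is: (a) show the $2^n$ Borels with every $Q_i = Q_{\pm,i}$ contain strongly regular elements (this is immediate from Theorem \ref{thm:mainthm}, since $\nilrad^{reg} \subset \Phi^{-1}(0)_{sreg} \subset \fg_{sreg}$ and $\nilrad^{reg} \subset \borel$); and (b) handle the remaining standard Borels, whose associated $\calQ$ has some $Q_i \notin \{Q_{+,i}, Q_{-,i}\}$.

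For part (b) the natural move is \emph{not} to look in the nilfibre but in a generic fibre $\Phi^{-1}(c)_{sreg}$. I would argue by induction on $n$: given a standard Borel $\fb = X_{\calQ}$ in $\fg$, the projection $\fb_n = X_{\calQ_n}$ is a standard Borel of $\fg_n$, so by induction it contains a strongly regular element $y$ of $\fg_n$; now I want to extend $y$ to a strongly regular $x \in \fb$ with $x_n = y$. Since $\fb$ generates the closed $K_{n+1}$-orbit $Q_{n+1}$, the extra entries of $x$ beyond those of $y$ live in a root-space configuration determined by $Q_{n+1}$ (the roots $\Gamma$ of (\ref{eq:roots}) when $1 < i < n+1$, or the full $(n+1)$st column / row when $i = n+1$ or $i = 1$). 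By part (2) of Theorem \ref{d:sreg} I only need $x_n = y$ regular (true), $x$ regular in $\fg$, and $\fz_{\fg_n}(y) \cap \fz_{\fg}(x) = 0$. Regularity of $x$ and the centralizer-intersection condition are each the non-vanishing of an explicit polynomial in the free parameters of $x$ (for regularity, that $x$ has distinct eigenvalues or is otherwise regular; for the intersection, a determinant built from the characteristic-polynomial coefficients of $x_n$ and $x$ as in the proof of Proposition \ref{prop:converse}), so it suffices to exhibit a single choice of the new parameters making both nonzero — equivalently, to see the relevant polynomials are not identically zero on the parameter space of $\fb$. The cleanest way to certify non-vanishing is to use the $K_{n+1}$-equivariance of everything and replace $\fb$ by the standard representative (\ref{eq:bigmatrix}), then exhibit a diagonal element (a regular semisimple $x \in \fh$) together with a perturbation in the $\Gamma$-root spaces, and compute that $\fz_{\fg}(x) = \fh$ meets $\fg_n$ only in $\fh_n$; a generic perturbation then kills that remaining intersection.

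The main obstacle I anticipate is exactly this last centralizer-intersection computation for the ``non-extreme'' orbits $Q_{i,j}$, i.e.\ showing $\fz_{\fg_n}(x_n) \cap \fz_{\fg}(x) = 0$ can be achieved inside a Borel whose $K_{n+1}$-orbit is $Q$ with $1 < i < n+1$. Note one must be slightly careful: by the Remark following Proposition \ref{prop:centralizers}, if $\fb$ generates a closed orbit that is neither $Q_{+,n+1}$ nor $Q_{-,n+1}$, then a \emph{nilpotent} $x \in \fb$ automatically fails condition (2) at the top level — so the strongly regular elements of such a $\fb$ are necessarily \emph{non-nilpotent}, living over a nonzero $c$. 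This is why the induction must be run through general fibres rather than the nilfibre, and it means I cannot simply quote Theorem \ref{thm:mainthm}; I have to produce the semisimple-plus-perturbation witness by hand. Concretely, I would: (i) take $x = h + u$ with $h \in \fh$ regular and $u$ supported on the root spaces $\fg_\gamma$, $\gamma \in \Gamma$; (ii) observe $x$ is regular (it is conjugate to $h$, or use that $\fb$ is solvable of the right dimension); (iii) compute that the only elements of $\fz_{\fg}(x)$ lying in $\fg_n$ are forced to be scalar multiples of $\id$ by the $h$-part, and then polynomials in $x$ with nonzero $\id$-coefficient are eliminated by a single well-chosen $u$ — exactly mirroring the induction in the proof of Proposition \ref{prop:converse}. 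Assembling these steps for a representative of each closed $K_{n+1}$-orbit, together with the inductive extension from $\fg_n$ to $\fg$, yields strong regularity for every standard Borel $\fb \subset \fg$.
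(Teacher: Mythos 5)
Your proposal is correct and follows essentially the same route as the paper: induction on $n$ through the decomposition $\fb = X_{\calQ}$ of Theorem \ref{thm:areborels}, reduction by $K_{n+1}$-equivariance (and Lemma \ref{l:kweyl}) to the standard representative (\ref{eq:bigmatrix}), and an explicit regular semisimple witness --- regular diagonal part plus entries in the $\Gamma$-root spaces of (\ref{eq:roots}) --- whose centralizer meets $\fg_{n}$ trivially. The paper does not need your case split (a)/(b), since the semisimple construction works uniformly for every closed orbit including $Q_{+,n+1}$ and $Q_{-,n+1}$, but that is only an organizational difference.
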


\begin{proof}
Let $\fb\subset \fg$ be a standard Borel subalgebra.  We construct an element $x\in\fb$ satisfying the following conditions.  
  \begin{equation}\label{eq:three}
\begin{split}
&(1)\; x_{i}\in\fg_{i} \mbox{ is regular semisimple for } i=1,\dots, n+1.\\
&(2)\; \fz_{\fg_{i}}(x_{i-1})\cap \fz_{\fg_{i}}(x_{i})=0 \mbox{ for } i=2,\dots,n+1.
\end{split}
\end{equation}
 The construction of $x$ proceeds by induction on $n$; 
the case $n=1$ being trivial.  
By Theorem \ref{thm:areborels}, the Borel subalgebra $\fb=\borel$ 
for some sequence of closed $K_{i}$-orbits $\calQ=(Q_{1},\dots, Q_{n}, Q_{n+1})$.
  Thus, $\fb_{n}=\fb_{Q_{1},\dots, Q_{n}}$ is a standard Borel subalgebra of $\fg_{n}$.  By induction, there exists $y\in \fb_{Q_{1},\dots, Q_{n}}$
 satisfying the conditions in (\ref{eq:three}) for $1\leq i\leq n$. 
 Since $y\in \fb_{Q_{1},\dots, Q_{n}}$ is regular semisimple, 
there exists $b\in B_{Q_{1},\dots, Q_{n}}$ such that $\Ad(b)y=h\in \fh_{n}$, 
where $\fh_{n}$ is the standard Cartan subalgebra of diagonal matrices 
in $\fg_{n}$ 
and $B_{Q_{1},\dots, Q_{n}}\subset G_{n}$ is the Borel subgroup 
corresponding to the Borel subalgebra $\fb_{Q_{1},\dots, Q_{n}}$.

By the results discussed in Section \ref{s:basicfacts}, 
the orbit $Q_{n+1}= K_{n+1}\cdot \fm$, 
where $\fm$ is the stabilizer of the flag 
$$
e_{1}\subset\dots\subset\underbrace{e_{n+1}}_{i}\subset\dots\subset e_{n}
$$
 for some $i=1,\dots, n+1$.  
The Borel subalgebra $\fm$ is given explicitly in Equation (\ref{eq:bigmatrix}).  Since $\borel$ and $\fm$ are standard Borel subalgebras contained in $Q_{n+1}$, Lemma \ref{l:kweyl} implies that $w\cdot \fm=\borel$ for some $w\in W_{K_{n+1}}$.  Recall the subset $\Gamma$ of positive roots for $\fm$ defined in Equation (\ref{eq:roots}) and note that the corresponding root spaces are
in the far right column and bottom row. 
  Now we define an element $z\in\fm$ as follows.  
Let $z_{n}=\Ad(w^{-1}) h$, and let the coefficient of $z$ in any root space $\fg_{\alpha}$ with $\alpha\in \Gamma$ be $1$, and define $z_{n+1 n+1}$ so that $z_{n+1 n+1}\neq h_{ii}$ for any $i$.  A simple matrix calculation shows that
the eigenvalues of $z$ are $\{ h_{11}, \dots, h_{nn}, z_{n+1 n+1}\}$, so
that $z$ is regular semisimple.  Since $\fz_{\fg_n}(z_n) = \fh_n$ and
$\fz_{\fh_n}(z)=0$, it follows that $\fz_{\fg_n}(z_n) \cap \fz_{\fg}(z)=0$.

Now consider $x=\Ad(b^{-1}w) z$.  Since $b^{-1}w\in K_{n+1}$, it follows that $x$
satisfies the conditions in (\ref{eq:three}) for $i=n+1$.  By construction $x_n = \Ad(b^{-1}w)z_n = y$.  
It then follows from Theorem \ref{d:sreg} that $x\in\fg_{sreg}$ and 
that $x$ satisfies the conditions of (\ref{eq:three}) for all $i=1,\dots, n+1$.  
Further, since $b\in B_{Q_{1},\dots, Q_{n}}\subset B_{Q_{1},\dots, Q_{n+1}}$ by Proposition \ref{prop:borels}, the strongly regular element $x\in\borel$.  
\end{proof}

Using Proposition \ref{p:standard}, we can prove the main result of the section.

\begin{thm}\label{thm:sregborels}
Let $\fb\subset\fg$ be any Borel subalgebra.  Then $\fb$ contains strongly regular elements of $\fg$.
\end{thm}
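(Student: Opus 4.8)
The plan is to derive Theorem~\ref{thm:sregborels} from Proposition~\ref{p:standard} by an equivariance argument for the maximal torus $H\subset G$ of diagonal matrices. Two facts drive it: first, $\fg_{sreg}$ is stable under conjugation by $H$; second, any nonempty $H$-stable closed subvariety of $\B_{n+1}$ contains a standard Borel subalgebra (indeed the $H$-fixed points of $\B_{n+1}$ are precisely the standard Borel subalgebras). Granting these, the theorem follows quickly.

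First I would record the $H$-stability of $\fg_{sreg}$. This rests on the elementary observation that for $h\in H$ one has $(\Ad(h)x)_{i}=\Ad(h_{i})(x_{i})$ for every $i$, where $h_{i}\in GL(i,\C)$ is the upper left $i\times i$ corner of $h$: conjugation by a block diagonal matrix respects the upper left corners. Since $h_{i}$ is invertible, $x_{i}$ is regular iff $(\Ad(h)x)_{i}$ is, and $\fz_{\fg_{i}}((\Ad(h)x)_{i})=\Ad(h_{i})\fz_{\fg_{i}}(x_{i})$. Because $\Ad(h_{i+1})$ restricts on $\fg_{i}$ to $\Ad(h_{i})$, the intersection $\fz_{\fg_{i}}((\Ad(h)x)_{i})\cap\fz_{\fg_{i+1}}((\Ad(h)x)_{i+1})$ equals $\Ad(h_{i+1})\big(\fz_{\fg_{i}}(x_{i})\cap\fz_{\fg_{i+1}}(x_{i+1})\big)$, hence vanishes iff the corresponding intersection for $x$ does. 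By Theorem~\ref{d:sreg}(2), $x\in\fg_{sreg}\iff\Ad(h)x\in\fg_{sreg}$. (It is worth noting the contrast with Section~\ref{s:Korbs}: conjugation by a general element of $K_{n+1}$ does \emph{not} commute with the truncations $x\mapsto x_{i}$ for $i\le n-1$ and does not preserve $\fg_{sreg}$; the torus $H$ is exactly the subgroup of $G$ whose adjoint action commutes with all of these truncations, which is why it is the right group here.)

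Next I would pass to the incidence variety $\widetilde{\fg}=\{(\fb^{\prime},x)\in\B_{n+1}\times\fg : x\in\fb^{\prime}\}$, the total space of the vector bundle $G\times_{B}\fb\to\B_{n+1}$, with its projection $\pi:\widetilde{\fg}\to\B_{n+1}$; since $\pi$ is a vector bundle projection it is an open map, and $H$ acts diagonally on $\widetilde{\fg}$ with $\pi$ equivariant. Put $\widetilde{\fg}_{sreg}=\{(\fb^{\prime},x)\in\widetilde{\fg}:x\in\fg_{sreg}\}$. As $\fg_{sreg}$ is open in $\fg$ by Kostant--Wallach, $\widetilde{\fg}_{sreg}$ is open in $\widetilde{\fg}$, so $W:=\pi(\widetilde{\fg}_{sreg})$ is open in $\B_{n+1}$; by the previous paragraph $W$ is $H$-stable, and by Proposition~\ref{p:standard} $W$ contains every standard Borel subalgebra. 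To conclude, suppose $W\neq\B_{n+1}$; then $Z:=\B_{n+1}\setminus W$ is a nonempty closed $H$-stable subvariety of the projective variety $\B_{n+1}$, hence complete, so by the Borel fixed point theorem applied to the torus $H$ it contains an $H$-fixed point — necessarily a standard Borel subalgebra, contradicting $Z\cap W=\emptyset$. Therefore $W=\B_{n+1}$, which is the theorem. I do not foresee a genuine obstacle here; the whole argument hinges on the compatibility $(\Ad(h)x)_{i}=\Ad(h_{i})(x_{i})$ from the first step, and everything else is formal.
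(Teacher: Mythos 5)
Your argument is correct and follows essentially the same route as the paper: both show the set of Borel subalgebras meeting $\fg_{sreg}$ is open by projecting the open preimage of $\fg_{sreg}$ from the incidence variety $\tilde{\fg}$, and both then use $H$-stability of $\fg_{sreg}$ together with Proposition \ref{p:standard} to rule out a nonempty complement. The only cosmetic differences are that the paper deduces openness of $\pi$ from smoothness/flatness rather than the vector-bundle structure, and locates a standard Borel in the complement via closed $H$-orbits (orbit closures) rather than the Borel fixed point theorem --- these are interchangeable.
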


\begin{proof}
For ease of notation, we denote the 
flag variety $\mathcal{B}_{n+1}$ of $\fgl(n+1,\C)$ by $\B$.  
Define $$\B_{sreg}=\{\fb\in\B\; |\; \fb\cap\fg_{sreg}\neq\emptyset\}.$$
  By Proposition \ref{p:standard}, $\B_{sreg}$ is nonempty and 
we claim that $\B_{sreg}$ is open in $\B$.  
To see this, we use the Grothendieck resolution 
$\tilde{\fg}=\{ (x,\fb)\in \fg \times \B \; | \; x\in\fb\}$,
as well as the morphisms  
$\mu:\tilde{\fg}\to \fg,\; \mu(x,\fb)=x$, and $\pi: \tilde{\fg}\to\B,\; \pi(x,\fb)=\fb$.  
Then $\pi$ is a smooth morphism of relative dimension $\dim\fb={n+2\choose 2}$ by \cite{CG}, Section 3.1 and \cite{Ha}, Proposition III.10.4. Thus,
$\pi$ is a flat morphism (\cite{Ha}, Theorem III.10.2) and hence
open by Exercise III.9.1 in \cite{Ha}.
  Now consider $\mu^{-1}(\fg_{sreg})=\{(x,\fb): x\in\fg_{sreg},\, x\in\fb\}$.  Since $\fg_{sreg}\subset\fg$ is open, $\mu^{-1}(\fg_{sreg})\subset\tilde{\fg}$ is open, 
and it follows that $\pi(\mu^{-1}(\fg_{sreg}))$ is open in $\B$. 
 But it is easily seen that $\pi(\mu^{-1}(\fg_{sreg}))=\B_{sreg}$, which proves the claim.  

We now show that the closed set $Y=\B\setminus \B_{sreg}$ is empty.
 Suppose to the contrary that $\fb\in Y$.  
  It follows from Theorem \ref{d:sreg} that $H$ acts on $\fg_{sreg}$ by conjugation and hence on $\B_{sreg}$ and $Y$ by conjugation.  Thus, $H\cdot \fb\subset Y$.  Since $Y$ is closed, we have $\overline{H\cdot \fb}\subset Y$.  Now $\overline{H\cdot \fb}$ contains a closed $H$-orbit, 
and the closed $H$-orbits in $\B$ are precisely the standard Borel subalgebras of $\fg$ (\cite{CG}, Lemma 3.1.10). Hence, there is a standard Borel $\fb_{std}$
in $Y$, which contradicts Proposition \ref{p:standard}.

\end{proof}

\bibliographystyle{amsalpha.bst}

\bibliography{bibliography}

\end{document}